%
%
%
%
%
%
\RequirePackage{fix-cm}
\documentclass[smallextended]{svjour3a}       
\smartqed  
\usepackage[T1]{fontenc}
\usepackage[utf8]{inputenc}
\usepackage{color}
\usepackage{array,epsfig,amsfonts,mathrsfs,amsmath,cite}
\usepackage{amstext}
\usepackage{graphicx}
\usepackage{setspace}
\usepackage{esint}
\setstretch{1.2}

\makeatletter
\pdfpageheight\paperheight
\pdfpagewidth\paperwidth

%
%
%
\@ifundefined{definecolor}
{\usepackage{color}}{}


\textwidth=15.0cm
\textheight=23cm
\hoffset=0.3cm
\voffset=0.3cm


\makeatletter
\makeatother

\allowdisplaybreaks

\newcommand{\R}{\mathbb{R}}

\newcommand{\abs}[1]{\left\vert#1\right\vert}

\newcommand{\bracket}[1]{\left[#1\right]}


\newcommand{\di}{{\rm div}}


\DeclareSymbolFont{lettersA}{U}{pxmia}{m}{it}
\DeclareMathSymbol{\piup}{\mathord}{lettersA}{"19}


\newcommand{\bn}{\mathbf n}
\newcommand{\bO}{O}
\newcommand{\bu}{\mathbf u}
\newcommand{\bv}{\mathbf v}

\newcommand{\bx}{\mathbf x}
\newcommand{\by}{\mathbf y}

\newcommand{\bk}{\mathbf k}

\newcommand{\bnu}{\boldsymbol\nu}
\newcommand{\btau}{\boldsymbol\tau}

\newcommand{\bmu}{\boldsymbol\mu}

\newcommand{\cd}{\mathcal{D}}
\newcommand{\cf}{\mathcal{F}}

\newcommand{\cs}{\mathcal{S}}
\newcommand{\ct}{\mathcal{T}}

\newcommand{\cw}{\mathcal{W}}
\newcommand{\cE}{\mathcal{E}}
\newcommand{\cq}{\mathcal{Q}}

\newcommand{\ft}{\mathscr{T}}

\newcommand{\tvf}{\tilde{\varphi}}

\newcommand{\ga}{\alpha}
\newcommand{\gb}{\beta}

\newcommand{\gd}{\delta}
\newcommand{\gD}{\Delta}
\newcommand{\gO}{\Omega}
\newcommand{\vf}{\varphi}
\newcommand{\ve}{\varepsilon}

\newcommand{\ls}{\bar{s}}
\newcommand{\lw}{\bar{w}}
\newcommand{\ts}{\tilde{s}}
\newcommand{\hs}{\hat{s}}
\newcommand{\DD}{D}
\makeatother

\makeatother

\usepackage{babel}
  \providecommand{\definitionname}{Definition}
  \providecommand{\lemmaname}{Lemma}
  \providecommand{\propositionname}{Proposition}
  \providecommand{\remarkname}{Remark}
\providecommand{\theoremname}{Theorem}
\providecommand{\problemname}{Problem}

\numberwithin{equation}{section}
\numberwithin{figure}{section}
\newtheorem{thm}{\protect\theoremname}
  \newtheorem{defn}{\protect\definitionname}
  \newtheorem{prop}{\protect\propositionname}
  \newtheorem{rem}{\protect\remarkname}
  \newtheorem{lem}{\protect\lemmaname}
\newtheorem{prob}{\protect\problemname}

\counterwithin{figure}{section}
\numberwithin{equation}{section}
\numberwithin{thm}{section}
\numberwithin{lem}{section}
\numberwithin{prop}{section}
\numberwithin{corollary}{section}
\numberwithin{defn}{section}
\numberwithin{rem}{section}
\numberwithin{prob}{section}

\journalname{Communications in Mathematical Physics (2021),}
\begin{document}

\title{Stability of Attached Transonic Shocks in Steady Potential Flow past Three-Dimensional Wedges}
\titlerunning{Stability of Three-Dimensional Attached Transonic Shocks}

\author{Gui-Qiang G. Chen\, \and Jun Chen\,  \and Wei Xiang}


\institute{Gui-Qiang G. Chen \at
          Mathematical Institute,\  University of Oxford, Oxford, OX2 6GG, UK\\
            \email{chengq@maths.ox.ac.uk}           
         \and
          Jun Chen \at
          Center of Applied Mathematics, Yichun University, Yichun, Jiangxi 336000, China\\
              \email{chenjun@jxycu.edu.cn}           
           \and
     Wei Xiang \at
     Department of Mathematics,	City University of Hong Kong, Hong Kong, China\\
              \email{weixiang@cityu.edu.hk}
}

\date{Received: May 8, 2020 / Accepted: June 29, 2021}
\maketitle

\begin{abstract}
$\,$ We develop a new approach and employ it to establish the global existence and nonlinear structural stability of
attached weak transonic shocks in steady potential flow past three-dimensional wedges; in particular,
the restriction that the perturbations are away from the wedge edge in the previous results is removed.
One of the key ingredients is to identify a {\it good} direction of the boundary operator of a boundary condition
of the shock along the wedge edge, based on the non-obliqueness of the boundary condition for the weak shock on the edge.
With the identification of this direction, an additional boundary condition on the wedge edge can be assigned to make sure
that the shock is attached on the edge and linearly stable under small perturbations.
Based on the linear stability, we introduce an iteration scheme and prove that there exists a unique fixed point of
the iteration scheme, which leads to the global existence and nonlinear structural stability of the attached weak transonic shock.
This approach is based on neither the hodograph transformation nor the spectrum analysis,
and should be useful for other problems with similar difficulties.

\keywords{$\,$ Stability \and multidimensional \and three-dimensional \and transonic shocks \and weak shocks \and attached shocks
 \and steady \and supersonic \and wedge \and nonlinear approach \and iteration scheme \and
{\it a priori} estimates \and boundary value problems \and boundary operator \and
weighted norms \and instability \and global existence \and uniqueness \and asymptotic behavior}

\subclass{$\,$ 35B35 \and 35B20 \and 35B40 \and 35B65 \and 35R35 \and 35M12 \and 35M10 \and 35J66 \and 76L05 \and 76N10}
\end{abstract}

\section{$\,$ Introduction}
We are concerned with the stability of attached transonic shocks in steady flow past three-dimensional wedges
with non-flat surfaces.
This is a longstanding problem
at least dating back to Prandtl \cite{Prandtl} in 1936,
in which it was first conjectured that the weak shock solution is stable.
In this paper,
we develop a new approach and employ it to establish
the global existence and nonlinear
structural stability of attached weak transonic shocks in steady flow
past three-dimensional wedges with non-flat surfaces, governed by the three-dimensional Euler
equations for potential flow.
The perturbations of the wedge and the incoming flow are allowed up to the wedge edge,
which removes the assumption in \cite{CF} that the perturbations are away from the wedge edge.

As indicated in \cite{CoF}, when a uniform supersonic flow passes a symmetric wedge with flat surface
whose (half) wedge-angle is less than the critical angle,
an attached plane shock is expected to be generated.
There are two solutions satisfying the physical entropy condition.
The solution with smaller density of the constant downstream flow
is called a weak shock solution,
while the other is called a strong shock solution.
The downstream state of the strong shock solution is always subsonic,
but the downstream state of the weak shock solution can be either subsonic or supersonic,
depending on the wedge-angle.
Thus, a natural question is which one, or both, could be actually physical.
There has been a long debate about whether the strong shock or the weak shock, or both, would be stable
starting 1930s (see Prandtl \cite{Prandtl}, Courant-Friedrichs \cite[Section 123]{CoF},
and von Neumann \cite{Neumann}; also see \cite{Liu,Serre});
this is partly because it is a basic principle
in physics that a physical shock solution
should be stable under small perturbations in an appropriate sense.
Therefore, it is important to study the stability or instability of such plane
shock solutions to single out the physical ones.

Some satisfactory results for the two-dimensional case have been obtained.
In particular, the existence, uniqueness, stability, and asymptotic behavior of solutions
under a small perturbation for both the strong and weak shocks have been established.
We refer the reader
to \cite{CCF,CCF1,ChenLi,ChenZhangZhu,Chen2,Chen3,Sxchen,ChenFang,Fang,FX,Gu,Schaeffer,YinZhou2009JDE,Zh1,Zh2}
for more details; also see Chen-Feldman \cite{CF2}.
On the other hand, the three-dimensional case is completely different.
For the potential flow, Li-Xu-Yin \cite{LXY} showed that the three-dimensional attached strong shock
past a sharp wedge is not stable with respect to a periodic perturbation,
while Chen-Fang \cite{CF} proved that the three-dimensional weak shock is stable
if the perturbations are away from the wedge edge.

In this paper, we remove the restriction in \cite{CF} and prove that the three-dimensional weak shock is stable
even if both perturbations of the wedge edge and the incoming flow up to the wedge edge are allowed.
This provides an answer to the issue that has been debated since Courant-Friedrichs \cite{CoF}
and von Neumann \cite{Neumann} for the stability
of the attached transonic shock governed by the potential flow equation.

To achieve this, we
first formulate the shock problem as a free boundary problem
and then develop a different approach to handle the free boundary problem
from the ones used in \cite{CF,LXY}
by identifying
a {\it good} direction of the boundary operator of a boundary condition
on the shock along the wedge edge,
based on the non-obliqueness of the boundary condition for the weak shock on the edge.
The identification of this {\it good} direction allows us to assign an additional boundary condition on the wedge edge to make
sure that the shock is attached on
the edge under the small perturbations.
Based on this observation, we design a barrier function of the solutions near the wedge edge,
which allows us to show the $C^{1,\alpha}$--regularity of solutions near the edge.
Then we can establish the linear stability by constructing a solution of the linearized problem
via cutting off the wedge edge and passing to the limit.
Based on the linear stability, we adapt an iteration scheme and prove that there exists
a unique fixed point of the iteration scheme, which leads to the global existence and
nonlinear structural stability of the attached weak transonic shock. This approach
is based on neither the partial hodograph transformation nor the spectrum analysis, and
should be useful for other problems with similar difficulties.

There are other related references on various free boundaries for the Euler equations,
such as \cite{BCF,BCF1,BCF2,CF1,CF2,CFX,CY,EL} for the global self-similar shock solutions past wedges
and \cite{CHWX,CDX,CDX1,CDX2,HKWX,QW,XZZ} for the Euler flows without shocks of strong strength.

The rest of the paper is organized as follows:
In \S 2, we formulate the shock problem as a free boundary problem (Problem 2.1) and then state the main theorem of the paper (Theorem 2.1).
In \S 3, we reformulate the free boundary problem into a nonlinear fixed boundary problem by introducing the coordinate transformation
and then introduce the iteration scheme in the new coordinates.
In \S 4, we show that the iteration scheme is well-defined by proving that the linearized problem introduced in \S 3 can be uniquely solved.
Finally, we establish the main theorem (Theorem 2.1) in \S 5 by applying the Banach fixed-point theorem.

\section{$\,$ Mathematical Formulation and Main Theorem -- Theorem 2.1}
In this section, we formulate the shock problem as a free boundary problem and then state
the main theorem of this paper, Theorem 2.1.

\subsection{$\,$ Wedge Surfaces and the Euler Equations for Potential Flow}
Define the wedge surface by
\begin{align}\label{def-wedge}
\cw = \big\{\bx\in \R^3\,:\,(x_1,x_3) \in \cd_{e_1} , x_2 = w(x_1,x_3)\big\},
\end{align}
where
\begin{align}\label{def-wedgedomain}
\cd_{e_1} = \big\{(x_1,x_3) \in \R^2\,:\,x_1 > e_1(x_3) \big\}.
\end{align}
Set
\begin{align*}
e_2(x_3) = w( e_1(x_3) , x_3).
\end{align*}
Then
\begin{align}\label{def-edge}
\cE = \big\{\bx=(x_1,x_2 ,x_3)\in \R^3\,:\, x_1 = e_1(x_3), x_2 = e_2(x_3), x_3 \in \R \big\}
\end{align}
is the wedge edge.

In this paper, we focus on the compressible flows past over the wedge surface $\cw$
governed by the Euler equations for potential flow:
\begin{equation}
\di\big(\rho(\abs{\DD\varphi}^{2})\DD\varphi\big)=0,  \qquad\,\, \bx\in \R^3, \label{eq:potential}
\end{equation}
where $\varphi=\varphi(\bx)$ is the velocity potential so that $\bu=\DD\varphi$,
$q=|\DD\varphi|=|\bu|$ is the speed, and
$\rho$ is the density with
\begin{align}\label{def-rho}
\rho(q^{2})=\big(1-\frac{\gamma-1}{2}q^{2}\big)^{\frac{1}{\gamma-1}}
\end{align}
by scaling from the Bernoulli law for polytropic gases with adiabatic
exponent $\gamma>1$, and the gradient $\DD:=(\partial_{x_1}, \partial_{x_2}, \partial_{x_3})$.

The sonic speed is
\begin{align*}
c(q^{2})=\big(1-\frac{\gamma-1}{2}q^{2}\big)^{\frac{1}{2}}.
\end{align*}
The flow is called supersonic if
\begin{align}
|\DD\vf| > c(|\DD \vf|^2) \label{def-supersonic}
\end{align}
and called subsonic if
\begin{align}
|\DD\vf| < c(|\DD \vf|^2). \label{def-subsonic}
\end{align}
By the definition of $\rho$ in \eqref{def-rho}, supersonicity \eqref{def-supersonic} is equivalent to
\begin{align*}
|\DD\vf| > c_*,
\end{align*}
and subsonicity \eqref{def-subsonic} is equivalent to
\begin{align*}
|\DD\vf| < c_*,
\end{align*}
where $c_*=\frac{1}{\sqrt{\gamma +1}}$.

Moreover, the potential flow equation \eqref{eq:potential} can be written
in the following non-divergence form:
\begin{align}\label{eqn-nondiv}
\sum_{i,j=1}^{3}a_{ij}(\DD\varphi)\partial_{x_{i}x_{j}}\varphi=0,
\end{align}
where
\begin{align}
a_{ij}(\DD\varphi)&=
c^{2}(\abs{\DD\varphi}^{2})\gd_{ij}
-\partial_{x_{i}}\varphi\partial_{x_{j}}\varphi  \label{exp-aij}
\end{align}
and
$\gd_{ij}=1$ if $i=j$
and $0$ if $i\not=j$.

\subsection{$\,$ Shock Solutions}
Let $\cs$ be a $C^1$--surface separating an open domain $\gO$ into $\gO^-$ and $\gO^+$.
Let $\vf$ be both a $C^1$--function in each subdomain of $\gO^\pm$
and a weak solution in $W^{1,\infty}(\Omega)$ of \eqref{eq:potential} in the distributional sense in $\Omega$.
Denote
\begin{align*}
\vf^+ := \vf |_{\overline{\gO^+}}, \qquad \vf^- := \vf |_{\overline{\gO^-}}.
\end{align*}
Then $\varphi^{+}$ and $\varphi^-$ are classic solutions of \eqref{eq:potential}
in $\gO^+$ and $\gO^-$, respectively.
We can see from integration by parts that $\vf$ must satisfy
the following Rankine-Hugoniot conditions on  $\cs$:
\begin{align}
& \vf^+ = \vf^- ,\label{con-RH1}\\
& \big[\rho(\abs{\DD\varphi}^{2})\DD\varphi\big]\cdot \bnu =0,\label{con-RH2}
\end{align}
where $\bnu$ is the unit normal on $\cs$, and the bracket of a function denotes the difference between
the limiting values ({\it i.e.}, traces) of the function from both sides on $\cs$.
Differentiating \eqref{con-RH1} along the tangential direction $\btau$ on $\cs$ leads to
\begin{align}
\DD \vf^+ \cdot \btau  = \DD \vf^- \cdot \btau  \qquad \mbox{on $\cs$}.  \label{con-RH4}
\end{align}
Therefore,
\begin{equation}\label{2.4a}
\mbox{$\bracket{\DD \vf }\qquad $ is parallel to the normal $\bnu$ on $\cs$.}
\end{equation}

Furthermore, surface $\cs$ is called a shock provided that $\vf$ satisfies
the additional physical entropy condition on $\cs$:
\begin{align}
\rho(\abs{\DD\varphi^-}^{2}) <  \rho(\abs{\DD\varphi^+}^{2}), \label{con-entropy}
\end{align}
if the flow direction is from $\gO^-$ to $\gO^+$.
In this case, the corresponding piecewise $C^1$--function $\varphi$
is called a shock solution in $\Omega$; also see \cite{CF2,CoF,Dafermos}.

Set
\begin{align}\label{def-H}
H (\bu, \bv) := \big(\rho(|\bu|^2)\bu - \rho(|\bv|^2)\bv\big)\cdot (\bu-\bv) \qquad \mbox{for $\bu,\bv \in \R^3$}.
\end{align}

It is direct to see that, by \eqref{2.4a}, condition \eqref{con-RH2} is equivalent to
\begin{align}
H(\DD\vf^-, \DD \vf^+)
 =0  \qquad \mbox{ on $\cs$}.       \label{con-RH3}
\end{align}

To study the stability of the attached planar shock,
we regard the planar shock as a background solution.

\begin{defn}[Background Solutions]
$\,$ A piecewise constant function $\vf$ defined in $\Omega$
is called a {\it background solution} if $\vf$ satisfies the following conditions{\rm :}
\begin{enumerate}
\item  The plane shock $\cs$ divides $\Omega$  into $\gO^-$ and $\gO^+${\rm ;}
\item In both subdomains  $\gO^-$ and $\gO^+$, $\DD \vf^-$ and $\DD \vf^+ $ are constant vectors, respectively{\rm ;}
\item  $\vf^-$ is supersonic and $\vf^+$ is subsonic{\rm ;}
\item  $ \vf^-$ and $\vf^+ $ satisfy the Rankine-Hugoniot conditions \eqref{con-RH1}--\eqref{con-RH2} and the entropy condition \eqref{con-entropy}.
\end{enumerate}
\end{defn}

In this paper, we are concerned with the stability of the weak transonic shock background solution generated by a constant supersonic flow onto a flat wedge
with (half) wedge-angle $\theta_{\rm w}$.
It is direct from conditions \eqref{con-RH4}--\eqref{con-RH3} to find a piecewise constant transonic flow in the following way:
Define
\[
 \DD \vf^- := U^- =(u_1^-,u_2^-, u_3^-), \qquad \DD \vf^+ := U^+ =(u_1^+,u_2^+, u_3^+).
\]
The three-dimensional space for the phase states $(u_1,u_2,u_3)$ is our phase space.
Let
\[
U^-_0 = (q_0,0,0), \quad q_0 > c_*.
\]
Let $\theta_{\rm i}$
be the angle between $U_0^-$ and the wedge edge.
In the $\mathbf{x}$--coordinate system, if the flat wedge is symmetric with respect to the $(x_1, x_3)$--plane
and the wedge edge passes the origin, then the plane wedge function is
$$
w(x_1,x_3)=(x_1\sin\theta_{\rm i}-x_3\cos\theta_{\rm i})\tan\theta_{\rm w} \qquad \mbox{defined on $\{x_1>x_3\cot\theta_{\rm i}\}$};
$$
that is, $e_1(x_3)=x_3\cot\theta_{\rm i}$.

In the phase space, all the possible downstream velocity states $U=(u_1,u_2,u_3)$ of piecewise constant transonic flows
that connect with $U^-_0$ by plane shocks together form a balloon.

For the convenience of computation of the shock wave,
we rotate the coordinate system with a proper angle such that the plane wedge surface is the $(x_1,x_3)$--plane
(\emph{i.e.}, the downstream subsonic flow is parallel to the $(x_1,x_3)$--plane), and the $x_3$--axis lies
in the shock plane (see Fig. \ref{fig:rotation}).
Now, in the new coordinates, $e_1(x_3)=0$ and $w(x_1,x_3)=0$.

\begin{figure}
	\centering
	\includegraphics[width=0.7\textwidth]{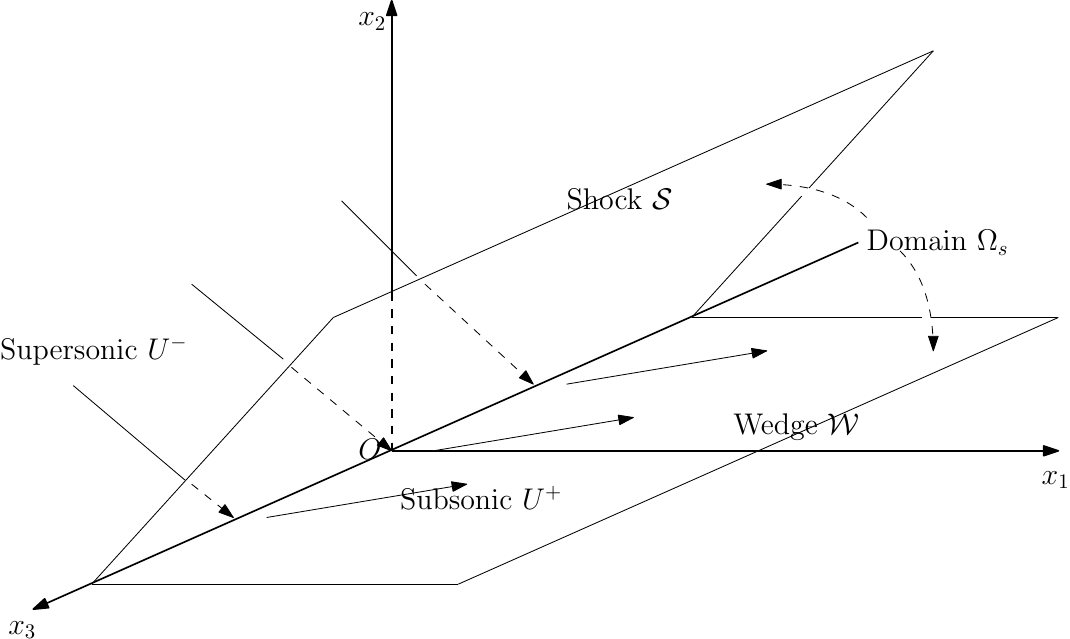}
	\protect\caption{The new coordinates after the rotation.\label{fig:rotation}}
\end{figure}

Without loss of the generality, we analyze the free boundary problem in the new coordinates, which are still denoted by $(x_1,x_2,x_3)$.
Then, in the coordinates, we know that $u_2^+ =0$ and, by condition  \eqref{con-RH4}, $u_3^+ =u_3^-$.
Then we have
\begin{align}
 U_0^- &=(q_0 \sin \theta_{\rm i} \cos \theta_{\rm w} ,-q_0 \sin \theta_{\rm i} \sin \theta_{\rm w}, q_0\cos \theta_{\rm i}), \label{def-u0-}\\
  U_0^+ &=(u_0 ,0, q_0\cos \theta_{\rm i}).\label{def-u0+}
\end{align}

Notice that the wedge-angle $\theta_{\rm w}$ and the angle $\theta_{\rm i}$
between $U_0^-$ and the wedge edge
are independent of the change of the coordinates.

For the given weak transonic shock solution $(U_0^-, U^+_0)$ defined in \eqref{def-u0-}--\eqref{def-u0+},
the shock surface is
\begin{align*}
\cs_0 = \{\bx \in \R^3\, :\, x_2 = \sigma x_1, x_1 \ge 0\}
\end{align*}
with
\begin{align}\label{exp-k0}
\sigma = \frac{q_0 \sin \theta_{\rm i} \cos \theta_{\rm w} - u_0}{q_0 \sin\theta_{\rm i} \sin \theta_{\rm w}}.
\end{align}
Set
\begin{align}
&\vf^-_0 (\bx) = q_0 \sin\theta_{\rm i} \cos \theta_{\rm w} x_1
  -q_0 \sin\theta_{\rm i} \sin \theta_{\rm w}x_2 + q_0\cos\theta_{\rm i}x_3, \label{exp-phi0-}\\[1mm]
&\vf^+_0 (\bx) =u_0 x_1 +  q_0\cos\theta_{\rm i} x_3.\label{exp-phi0+}
\end{align}
Then
\begin{align}\label{2.25}
\vf_0 (\bx)
&=
\begin{cases}
\vf^-_0 (\bx) &\,\,\mbox{for $x_2 > \sigma x_1$},\\[1mm]
\vf^+_0 (\bx) &\,\, \mbox{for $x_2 < \sigma x_1$}
\end{cases}
\end{align}
is a piecewise constant transonic flow, which is used as our background solution
for the shock problem.

\subsection{$\,$ Free Boundary Problem and the Main Theorem}

Given the piecewise constant background solution with planar transonic shock,
we formulate the shock problem as the following
free boundary problem:

\begin{prob}[Free Boundary Problem]\label{prob}
$\,$ Let a background solution $(\vf^-_0, \vf^+_0)$ with a plane shock $\cs_0$ be given by \eqref{2.25}.
Suppose that the incoming flow $\vf^-$ defined in the domain{\rm :}
\begin{equation*}
\gO^-_{\rm e} = \left\{\bx \in \R^3\,:\,\frac{\sigma}{2}(x_1 -e_1(x_3) )<x_2 -e_2(x_3)< 2 \sigma
(x_1 -e_1(x_3)) \right\}
\end{equation*}
is a small perturbation of  $\vf^-_0$ and
satisfies equation \eqref{eqn-nondiv}.
Suppose that $\cw$ defined by \eqref{def-wedge} in domain \eqref{def-wedgedomain}
is a small perturbation of the half $(x_1,x_3)$--plane with positive $x_1$-axis.
Let
\begin{equation*}
 \cf_{e_2}:=\left\{(x_2,x_3) \in \R^2\,:\,x_2>e_2(x_3)\right\}.
\end{equation*}
Find a subsonic solution $\vf^+$ and a shock surface $($as a free boundary$)${\rm :}
\begin{align}\label{def-shock}
\cs:= \big\{\bx \in \R^3\,:\,(x_2,x_3) \in \cf_{e_2} ,  x_1= s(x_2,x_3)\big\} \qquad \mbox{with $e_1(x_3) = s(e_2(x_3),x_3)$},
\end{align}
which are also small perturbations of both the background solution $\vf_0^+$ and the plane shock $\cs_0$,
such that
\begin{enumerate}
\item The Rankine-Hugoniot conditions \eqref{con-RH1}--\eqref{con-RH2} as free boundary conditions hold on the shock surface $\cs$.
\item $\vf^+$ satisfies equation \eqref{eqn-nondiv} in the subsonic domain{\rm :}
\begin{align}\label{def-domain}
\gO_{\rm s}:= \big\{\bx \in \R^3\,:\,(x_1,x_3) \in \cd_{e_1} , w(x_1,x_3) < x_2,\, s(x_2,x_3)<x_1 \big\}.
\end{align}
\item The slip boundary condition holds on the wedge surface $\mathcal{W}${\rm :}
\begin{align}\label{con-slip}
\DD \vf^+ \cdot \bn |_{\cw} =0,
\end{align}
where $\bn = (-w_{x_1},1, -w_{x_3} )$ is a normal direction on $\cw$.
\end{enumerate}
\end{prob}

In order to measure the perturbations described in {Problem \ref{prob}} precisely,
we first
introduce the following weighted H\"older norms:
Let $\gO$ be an open domain in $\R^3$ with edge $\cE\subset\partial\gO$.
For any $\bx, \bx' $ in  $\gO$,  define
\begin{align}
&\gd_\bx :=  \min
(\textrm{dist}(\bx,\cE),1),&&\gd_{\bx,\bx'} := \min (\gd_\bx,\gd_{\bx'}),\label{def:2.27} \\
&\gD_\bx := \sqrt{x_1^2 +x_2^2}+1, &&\gD_{\bx, \bx'}:=
\min(\gD_\bx, \gD_{\bx'}).
\end{align}
Let $\ga \in (0,1)$,  $\tau,l\in \R$, and $k$ be a nonnegative integer.
Let $\bk = (k_1, k_2, k_3)$ be an integer-valued vector with $k_1, k_2, k_3 \ge 0$
and $|\bk|=k_1 +k_2 + k_3$,
and let $\DD^{\bk}= \partial_{x_1}^{k_1}\partial_{x_2}^{k_2}\partial_{x_2}^{k_2}$.
We define
\begin{align}
[ f ]_{k,0;(l);\gO}^{(\tau)}
&= \sup_{ \begin{subarray}{c}
	\bx\in \gO\\
	|\bk|=k
	\end{subarray}}\big((\gd_{\bx})^{\max (k+\tau,0)} \gD_\bx^{l+k}  |\DD^\bk f(\bx)|\big),\label{def-normk0}\\
{[ f ]}_{k,\ga;(l);\gO}^{(\tau)}&=
\sup_{
	\begin{subarray}{c}
	\bx, \bx'\in \gO, \bx \ne \bx'\\
	|\bk|=k
	\end{subarray}}
\begin{array}{l}
\Big((\gd_{\bx,\bx'})^{\max(k+\ga+
	\tau,0)}  \gD_{\bx,\bx'}^{l+k+\ga}\frac{|\DD^\bk f(\bx)-\DD^\bk
	f(\bx')|}{|\bx-\bx'|^\ga}\Big),
\end{array}\label{def-normkga}
\\
\|f\|_{k,\ga;(l); \gO}^{(\tau)}&= \sum_{i=0}^k {[ f ]_{k,0;(l);\gO}^{(\tau)}} + {[ f ]}_{k,\ga;(l);\gO}^{(\tau)}. \label{def-norm}
\end{align}

Similarly, for a domain  $\gO$ in the $(x_1,x_3)$--plane with boundary $\cE$
close to the $x_3$--axis,  we modify the definition of $\gD_{\bx}$ to
\begin{align*}
\gD_{\bx}:= |x_1|+1 \qquad \text{ for $\bx= (x_1,x_3) \in \gO$},
\end{align*}
and the definition of norms \eqref{def-normk0}--\eqref{def-norm} above applies to
the functions defined on $\gO$ in the $(x_1,x_3)$--plane.

Finally, for either the $\R^3$ case or the $(x_1,x_3)$--plane case, denote the function space:
\begin{equation}\label{def-Ckga}
C^{k,\alpha;(l)}_{(\tau)}(\Omega) = \{ f: \|f\|_{k,\alpha;(l); \Omega}^{(\tau)}< \infty \}.
\end{equation}

Then the main theorem of the paper is stated as

\begin{thm}[Main Theorem] \label{thm-main}
$\,$ There are $\ga\in (0,1), \gb\in (0,1),  C_0>0$, and $\varepsilon>0$ depending only on the background state $(\vf^-_0, \vf^+_0)$
such that,  if
\begin{align}\label{2.17}
\|\vf^- - \vf^-_0\|_{3,\ga;(-\gb);\gO^-_e}^{(-2-\ga)} + \|e_1\|_{2,\ga;\R}+ \|w\|_{3,\ga;(-\gb);\cd_{e_1}}^{(-2-\ga)} \le \ve,
\end{align}
then there exist an attached weak transonic shock $\cs$ and a subsonic solution $\vf^+$  for
{Problem {\rm \ref{prob}}} satisfying the following estimate{\rm :}
\begin{align}
\begin{split}
&\|\vf^+ - \vf^+_0\|_{2,\ga;(-\gb);\gO_{\rm s}}^{(-1-\ga)} +\|s - s_0\|_{2,\ga;(-\gb);\cf_{e_2}}^{(-1-\ga)} \\
&\le C_0\Big(\|\vf^- - \vf^-_0\|_{2,\ga;(-\gb);\gO^-_e}^{(-1-\ga)} + \|e_1\|_{1,\ga;\R}+ \|w\|_{2,\ga;(-\gb);\cd_{e_1}}^{(-1-\ga)}\Big),
 \end{split}\label{est-thm}
\end{align}
where $w(x_1,x_3)$ and $e_1(x_3)$ are defined in \eqref{def-wedge}--\eqref{def-wedgedomain}, and $s_0(x_2)=\sigma^{-1}x_2$.
The solution satisfying estimate \eqref{est-thm} is unique.
In addition, if $|\vf^--\vf^-_0|+|\DD\vf^--\DD\vf^-_0|+|e_1|+|w|\rightarrow0$
as $x_3\rightarrow\pm \infty$
{\rm (}or as $x_3\rightarrow-\infty${\rm )}
pointwise,
then $|\vf^+-\vf^+_0|+|\DD\vf^+-\DD\vf^+_0|+|s-s_0|\rightarrow0$ as $x_3\rightarrow\pm \infty$
{\rm (}or as $x_3\rightarrow-\infty${\rm )}
pointwise correspondingly.
\end{thm}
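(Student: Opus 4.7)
The plan is to follow the four-step nonlinear strategy outlined in the introduction: (i) convert \textbf{Problem \ref{prob}} into a nonlinear fixed-boundary problem on a reference subsonic domain by a coordinate map that straightens the unknown shock $\cs$ onto the background plane $\cs_0$; (ii) linearize at the background state $(\vf^-_0,\vf^+_0)$ to obtain a second-order elliptic equation for $\psi:=\vf^+-\vf^+_0$ together with a coupled Rankine--Hugoniot condition on $\cs_0$ and the oblique slip condition on $\cw$; (iii) establish solvability and a weighted Schauder estimate for the linearized problem; and (iv) conclude existence, uniqueness, and the estimate \eqref{est-thm} via the Banach fixed-point theorem in the weighted H\"older space $C^{2,\ga;(-\gb)}_{(-1-\ga)}(\gO_{\rm s})\times C^{2,\ga;(-\gb)}_{(-1-\ga)}(\cf_{e_2})$.

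First I would perform a coordinate change of the form $y_1=x_1-s(x_2,x_3)$, $y_2=x_2-w(x_1,x_3)$, $y_3=x_3$, so that $\cs$ becomes $\{y_1=0\}$, $\cw$ becomes $\{y_2=0\}$, and the edge $\cE$ becomes $\{y_1=y_2=0\}$. Substituting into \eqref{eqn-nondiv} yields a quasilinear elliptic equation for $\psi$ on the fixed domain; the relation $H(D\vf^-,D\vf^+)=0$ on $\{y_1=0\}$ becomes a first-order boundary condition coupling $\psi$, $Ds$, and data extracted from $\vf^-$, while \eqref{con-slip} becomes an oblique condition on $\{y_2=0\}$. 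At each step of the iteration I would freeze the coefficients and the nonlinear remainders at the previous iterate, leaving a linear mixed boundary-value problem to solve.

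The main obstacle lies at the edge $\cE$. For the weak shock, the boundary operator from the Rankine--Hugoniot relation fails to be oblique with respect to $\cs_0$ along $\cE$, so one cannot simply recover $s$ from an ODE along $\cs_0$ with arbitrary initial data, and the usual oblique-derivative Schauder theory breaks down at $\cE$. Following the hint given by the authors, I would decompose the boundary operator restricted to $\cE$ into tangential and normal components and isolate a ``good'' direction along which the edge trace of the shock condition reduces to a first-order transport relation in $x_3$; this exposes precisely the extra degree of freedom needed to prescribe the attachment condition $s(e_2(x_3),x_3)=e_1(x_3)$ and still close the boundary-value problem. I would then construct a local barrier of the form $A(y_1+\sigma y_2)\pm\mu\,\dist(\,\cdot\,,\cE)^{1+\ga'}$, for suitable constants $A,\mu$ and $\ga'$ close to $\ga$, matched to the weights of \eqref{def-norm}, and apply the comparison principle to obtain $C^{1,\ga}$ control of $\psi$ up to $\cE$.

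To turn this into a genuine existence result for the linearized problem, I would solve it on the family of cut-off domains $\gO_{\rm s}\setminus N_\eta(\cE)$ obtained by removing a thin $\eta$-neighbourhood of the edge; there obliqueness is restored and classical oblique-derivative theory, combined with the Fredholm alternative, gives existence together with a uniform weighted Schauder estimate, the uniformity in $\eta$ being exactly what the barrier provides. Passing to the limit $\eta\to 0$ via weighted compactness then yields a solution of the linear problem on $\gO_{\rm s}$ with a linear-level analogue of \eqref{est-thm}. Finally I would verify that the map $(\vf^+,s)\mapsto(\vf^+_{\rm next},s_{\rm next})$ given by one step of this scheme is a contraction on a ball of radius $\sim\ve$ in $C^{2,\ga;(-\gb)}_{(-1-\ga)}$: the quadratic remainders produced by the coordinate straightening, by the expansion of $H$ near $(U_0^-,U_0^+)$, and by the quasilinear principal part are controlled by the linear Schauder constants together with the smallness assumption \eqref{2.17}. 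The Banach fixed-point theorem then delivers the unique attached weak transonic shock claimed in Theorem \ref{thm-main}, with uniqueness following from the same contraction argument applied to any two solutions in the ball.
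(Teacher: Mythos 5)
Your overall strategy mirrors the paper's: coordinate straightening, linearization, barrier-controlled weighted Schauder estimates on cut-off domains (with the edge removed), passage to the limit, and a Banach fixed-point argument. However, there are two concrete problems, the first of which is a genuine gap.

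The barrier you propose, $A(y_1+\sigma y_2)\pm\mu\,\dist(\,\cdot\,,\cE)^{1+\ga'}$, cannot yield the sign control required on both boundary pieces. On the wedge $\cw_0=\{y_2=0\}$ the boundary operator is $\partial_{y_2}$; the linear part gives $\partial_{y_2}(y_1+\sigma y_2)=\sigma>0$, whose sign is fixed and independent of the constants, and the distance part gives $\partial_{y_2}\big(\sqrt{y_1^2+y_2^2}\big)^{1+\ga'}\big|_{y_2=0}=0$, so it contributes nothing. Hence no choice of $A$, $\mu$, or sign makes $\partial_{y_2}$ of the barrier dominate a boundary datum $g_1$ of either sign while simultaneously $D(\cdot)\cdot\bmu$ has the correct sign on the shock side. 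The paper's barrier is of the form $\bar r^{\,l}\sin(t\bar\theta+\theta_0)$ in scaled polar coordinates adapted to the wedge angle, and the parameters $(l,t,\theta_0)$ (with $l=\gb$ for the far-field weight and $l=1+\ga$ for the edge regularity) are tuned precisely so that both boundary traces have the needed sign and decay; the angular degree of freedom $\theta_0$ is essential and has no analogue in your ansatz. Without such a barrier, the $C^{1,\ga}$-control of $v$ up to $\cE$ that you claim (and hence the uniformity in $\eta$ when passing $\eta\to 0$) does not follow.

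A secondary but substantive mischaracterization concerns the role of the ``good'' direction. You describe the edge trace of the shock condition as reducing to a transport equation in $x_3$. In the paper, the non-obliqueness at the edge means that the vector $\bmu=H_\bv(U_0^-,U_0^+)$ satisfies $\mu_1>0,\ \mu_2>0$, i.e.\ $\bmu$ points strictly into the wedge domain at $\cE_0$; this is what permits (indeed requires) imposing the \emph{extra Dirichlet condition} $(\gd\tvf)|_{\cE_0}=g_{\rm e}^{\hat s}$ derived from the attachment requirement $(\tvf-\vf^-)|_\cE=0$. No transport relation in $x_3$ is involved: the edge datum is obtained directly from the upstream flow $\vf^-$, and the shock function $s$ is updated a posteriori from the $\vf^-=\vf^+$ relation via the implicit function theorem. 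Your proposal does reach the correct conclusion that an extra edge condition enforcing attachment closes the problem, so this is more a misdescription than a fatal flaw, but it is worth correcting because the actual mechanism is what makes the $\mu_1,\mu_2>0$ computation in Proposition \ref{prop:5.2a} the crux of the argument.
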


\begin{rem}
$\,$ More precisely, in Theorem {\rm \ref{thm-main}}, the pointwise convergence means that,
if
\begin{align*}
&\lim_{x_3\rightarrow\pm\infty}e_1(x_3)=0,
\qquad
\lim_{x_3\rightarrow\pm \infty, x_1>0}w(x_1, x_3)=0,\\
&\lim_{\substack{x_1>0, \frac{\sigma}{2}x_1<x_2<2\sigma x_1\\x_3\rightarrow\pm \infty}}\big(|(\varphi^{-}-\varphi^{-}_0)(\mathbf{x})|
 +| (D\varphi^{-}-D\varphi^{-}_0)(\mathbf{x}) |\big) =0,
\end{align*}
then, correspondingly,
\begin{align*}
&\lim_{x_3\rightarrow\pm \infty, x_2>0}|(s-s_0)(x_2, x_3)|=0, \\
&\lim_{\substack{x_2>0,\pm(x_1-s_0(x_2))>0\\x_3\rightarrow\pm \infty}}\big(|(\varphi^{+}-\varphi^{+}_0)(\mathbf{x})|
+ |(D\varphi^{+}-D\varphi^{+}_0)(\mathbf{x})|\big) =0.
\end{align*}
\end{rem}

\smallskip
\begin{rem}
$\,$ In order to use the Banach fixed-point
theorem,
we assume the higher regularity with the norms in \eqref{2.17} than those in \eqref{est-thm},
due to the coordinate transformation introduced in {\rm \S \ref{subsec:coordinate}}
to flatten the wedge surface.
The main reason is that, after the coordinate transformation,
the coefficients in the equations and the boundary conditions depend on the derivatives
of the wedge surface.
We face the same situation when applying the partial hodograph transformation
if the wedge surface is not flat,
since the coefficients of the lower order terms depend also
on the derivatives of the wedge surface.
Therefore, the higher regularity in \eqref{2.17} for $w(x_1,x_3)$ and $e_1(x_3)$ is essential
in order to employ the contraction mapping theorem.
\end{rem}

\begin{rem}
$\,$ In {\rm \cite{CF}}, the stability of the piecewise constant weak transonic flow
is obtained in the sense that the perturbations are away from the wedge edge.
In this paper, we develop a different approach to remove the restriction
such that the structure is stable with respect to arbitrary small perturbations
of the wedge edge and the incoming flow up to the wedge edge.
\end{rem}

\begin{rem}
$\,$ From Theorem {\rm 2.1}, we can obtain the asymptotic behavior of the weak shock $\cs$ and
the subsonic solution $\varphi^+$ of {Problem {\rm \ref{prob}}}.
In fact, based on the definition of the weighted norm in \eqref{def:2.27}--\eqref{def-norm},
it follows from \eqref{est-thm} that $\DD\varphi^+$ converges to $\DD\varphi_0^+$ in $\Omega_{\rm s}$
with the decay rate $\Delta_{\bx}^{\beta-1}$ as $\Delta_{\bx}\to \infty$,
and the slope of the shock surface $\cs$
converges to the slope of $\cs_0$ with the same decay rate as $\Delta_\bx\to \infty$.
\end{rem}

\begin{rem}
$\,$ We establish the existence of solutions $\varphi^+$ that are uniformly bounded in the $x_3$--direction
and sublinearly grow in the $(x_1,x_2)$--directions.
This observation allows us to construct a barrier function first to show the uniqueness of solutions of
the linear problem $($see the proof of Theorem {\rm \ref{lem-linear}} below$)$, and then to show the uniqueness
of solutions of the nonlinear problem by the contraction mapping theorem.
\end{rem}

\section{$\,$ Mathematical Reformulation} \label{sec-iteration}
In this section, we reformulate the free boundary problem, {Problem \ref{prob}},
by introducing the coordinate transformation to fix the domain, and then introduce an iteration scheme in the new coordinates.
In other words, to solve {Problem \ref{prob}}, we follow the procedure as described below.

\subsection{$\,$ Background Solutions: Piecewise Constant Transonic Flow}\label{subsec:background}

For $(U_0^-, U_0^+)$ defined in \eqref{def-u0-}--\eqref{def-u0+},
condition \eqref{con-RH3} gives rise to
\begin{align}
\rho(q_0^2 )q_0^2\sin^2\theta_{\rm i}
- q_0 u_0\big(\rho(q_0^2 )+ \rho(u_0^2 +q_0^2\sin^2\theta_{\rm i})\big)\sin \theta_{\rm i}\cos \theta_{\rm w}
+ \rho(u_0^2 +q_0^2\sin^2\theta_{\rm i})u_0^2 =0. \label{eqn-utheta}
\end{align}
This implies as in \cite[\S 2]{CF} that, for a fixed incoming flow $U_0^-$,
the possible downstream states $U_0^+$ connecting by a shock form a balloon.
Next, for a fixed wedge edge (\emph{i.e.}, the $x_3$--axis),
the possible downstream velocity states together form a curve that is the intersection between the balloon and the plane orthogonal
to the wedge edge
(see Fig. \ref{fig:3dpolar}: \emph{all the red lines and curve lie in a plane orthogonal to the edge
with the plane also in red}).
The properties of the curve are similar to the corresponding shock polar in the two-dimensional case.
Moreover, on the curve,  $P_{\rm s}$ is the sonic point and $P_{\rm t}$ is the detached point.
The detached point $P_{\rm t}$ divides the two-dimensional shock polar curve into two subarches.
Let $\theta_{\rm w}^*$ be the dihedral wedge-angle such that the corresponding wedge plane intersects
the curve at $P_{\rm t}$.
Clearly, when $\theta_{\rm w}>\theta_{\rm w}^*$, there is no intersection point between the wedge plane and the curve,
which means that there is no attached plane shock for this case.

When $\theta_{\rm w}<\theta_{\rm w}^*$, the wedge plane intersects with the curve at two points: One of them
corresponding to the higher speed is the weak shock solution, and the other is the strong shock solution.
Let $U_0^+$ be the intersection point lying in the subarch corresponding to the weak shock solution.
Next, let $\theta_{\rm s}^*>0$ be the sonic angle such that $U_0^+=P_{\rm s}$ when $\theta=\theta_{\rm s}^*$.
By \cite[in \S 2]{CF}, such $\theta_{\rm s}^*$ exists with $\theta_{\rm s}^*\in(0,\theta_{\rm w}^*)$.
We know that the weak shock solution is transonic if $\theta_{\rm w}\in(\theta_{\rm s}^*,\theta_{\rm w}^*)$
and is supersonic if $\theta_{\rm w}\in(0,\theta_{\rm s}^*)$.
Therefore, the shock solution $(U_0^-,U^+_0)$ is a weak transonic solution if $\theta_{\rm w}\in(\theta_{\rm s}^*,\theta_{\rm w}^*)$.

\begin{figure}
	\centering
	\includegraphics[width=0.7\textwidth]{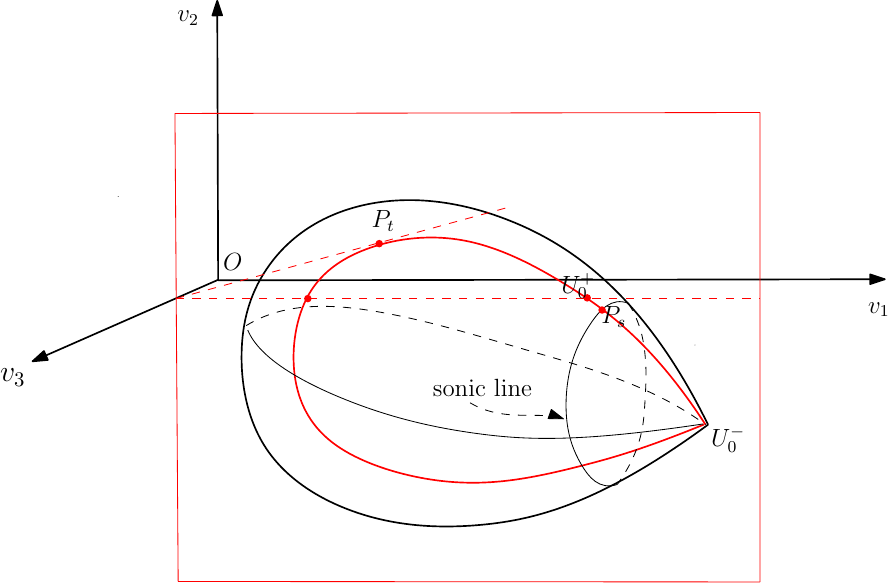}
	\protect\caption{Shock polar for the three-dimensional case.\label{fig:3dpolar}}
\end{figure}

\subsection{$\,$ Coordinate Transformation}\label{subsec:coordinate}
Given a function $s(x_2,x_3)$ close to $s_0(x_2)=\sigma^{-1}x_2$,
we first define the shock surface $\cs$ as a free boundary  $x_1=s(x_2,x_3)$,
and then define $\Omega_{\rm s}$ by \eqref{def-domain}.
Moreover, \eqref{con-RH1} holds on the shock surface, where $\vf^+$ is replaced by $\vf$:
\begin{equation*}
\varphi(s(x_2,x_3),x_2,x_3)=\varphi^-(s(x_2,x_3),x_2,x_3).
\end{equation*}

Notice that,
for each different function $s(x_2,x_3)$,
the shock surface is different, so that the boundary condition \eqref{con-RH2} is on the different surface.
Therefore, we need to further introduce the coordinate transformation to fix the shock surface.

Let
\begin{align*}
\cw_0:=\{\mathbf{y}:\, y_1 >0,\,  y_2 =0,\,y_3\in\mathbb{R}\},\qquad
\cE_0:=\{\mathbf{y}:\, y_1= y_2 =0,\,y_3\in\mathbb{R}\}.
\end{align*}
For our convenience, we first define a coordinate transformation to flatten the boundary of $\gO_{\rm s}$ and map $\gO_{\rm s}$
into the domain{\rm :}
\begin{equation}\label{eq:3.5}
\Pi:=\{\mathbf{y}:\, 0<y_2 < \sigma y_1 ,\,y_3\in\mathbb{R}\}
\end{equation}
bounded by the plane shock $\cs_0$ and the straight wedge $\cw_0$ with edge $\cE_0$.

Given the wedge function $w$ and the shock $\cs$, define the new coordinates in the following way:  First, let
\begin{equation}\label{def-transy2y3}
y_2=x_2-w(x_1,x_3),
\qquad y_3=x_3,
\end{equation}
where
$x_2=w(x_1,x_3)$ is the wedge surface,
and $(e_1(x_3),w(e_1(x_3),x_3),x_3)$ is the wedge edge.
Then the shock surface becomes
\begin{align*}
x_1 = s(y_2 + w(x_1, y_3), y_3).
\end{align*}	
Solving for $x_1$ gives
\begin{align*}
x_1 = \hs(y_2 , y_3).
\end{align*}
	
Let
\begin{align*}
 \gd \hs (y_2,y_3)& :=\hs(y_2,y_3) - s_0 (y_2) = \hs(y_2,y_3) - \sigma^{-1}y_2.
\end{align*}
Then we extend $\delta\hat{s}(y_2,y_3)$ to be a function of $\mathbf{y}=(y_1,y_2,y_3)$ by defining
\begin{align}\label{exp-extend}
\gd \bar{s}(\by) &:=\eta(\sigma y_1 -y_2) \int_{\R} \gd \hs(y_2 , y_3+ t(\sigma y_1 -y_2))	\xi(t)\, \mathrm{d}t ,
\end{align}
where $\xi(t)$ is a smooth mollifier satisfying
\begin{align*}
 \xi(t)\geq 0, \quad \mathrm{supp}\,  \xi(t) \subset [-1,1] ,  \quad \int_{\mathbb{R}}\xi(t)\,{\rm d}t=1,
\end{align*}
and  $\eta \in C_c^{\infty}(\mathbb{R})$ is a cutoff function with the following properties:
\begin{align*}
\begin{cases}
0 \le \eta(t)\le 1\quad &\mbox{for $t\in \R$}, \\
 \eta(t)=1 &\mbox{for $t\in [-1,1]$},\\
  \eta(t)=0 &\mbox{for $t\in (-\infty, -2]\cup [2,\infty)$}.
\end{cases}
 \end{align*}
Let
\begin{align*}
\cq = \{\by: y_1>0, y_2 >0\},\qquad  \cf_0 =  \{(y_2, y_3): y_2 >0\}.
\end{align*}
By definition \eqref{exp-extend}, we have
\begin{align}
&\gd \ls|_{\cs_0} =\gd \hs, \quad \mathrm{supp} (\gd \ls)\subset \{\by: -1 \le \sigma y_1 -y_2 \le 1\},\label{est-extend-a}\\
&\| \gd \ls \|_{2,\ga;(-\gb);\cq}^{(-1-\ga)} \le C \| \gd \hs \|_{2,\ga;(-\gb);\cf_0}^{(-1-\ga)},
\label{est-extend}
\end{align}
where the norm in \eqref{est-extend} is defined in \eqref{def-norm} via replacing $\cE$
by edge $\cE_0=\{(y_1,y_2,y_3):\,y_1=y_2=0\}$ in $\mathbb{R}^3$,
or by edge $\cE_0=\{(y_2,y_3):\,y_2=0\}$ in $\mathbb{R}^2$.
See Lemma 2.3 in \cite{GH} for the details of the proof of \eqref{est-extend}.

Define $y_1$ implicitly by
\begin{align}\label{def-transy1}
x_1& = y_1 + \gd \bar{s}(\by)  =  y_1 + \gd \bar{s}(y_1,x_2-w(x_1,x_3), x_3 ) .
\end{align}
Then we have

\begin{lem}\label{lem:3.1}
$\,$ If $\DD w$ and $\DD(\delta\hat{s})$ are sufficiently small in the $C^0$--norm,
then the coordinate transformation \eqref{def-transy2y3} and \eqref{def-transy1} is invertible.
Moreover, $\Omega_{\rm s}$ in the $\mathbf{x}$--coordinates defined by \eqref{def-domain}
is mapped into domain $\Pi$ in the new $\mathbf{y}$--coordinates defined by \eqref{eq:3.5}.
\end{lem}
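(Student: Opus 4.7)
My plan is to work with the target map $T\colon \by\mapsto \bx$ given explicitly by
\[
x_1 = y_1 + \delta\bar{s}(\by),\quad x_2 = y_2 + w(y_1 + \delta\bar{s}(\by),y_3),\quad x_3 = y_3,
\]
rather than with the implicit relation \eqref{def-transy1} directly. Since $w$ and $\delta\bar{s}$ are smooth, so is $T$. The Jacobian of $T$ is block-triangular (thanks to $x_3=y_3$), and when one expands the remaining $2\times 2$ block the cross terms involving $w_{x_1}\,\partial_{y_2}\delta\bar{s}$ cancel, so the determinant simplifies to $1 + \partial_{y_1}\delta\bar{s}$. Under the smallness hypothesis on $D\delta\bar{s}$, this quantity is bounded away from $0$, and hence $T$ is a local $C^1$-diffeomorphism.

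To promote local to global invertibility, I would construct the inverse by hand. Given $\bx$, set $y_3 = x_3$; any preimage must satisfy $y_1 + \delta\bar{s}(\by) = x_1$, which forces $y_2 = x_2 - w(x_1,x_3)$ --- this is exactly the explicit relation \eqref{def-transy2y3}. With $y_2, y_3$ fixed, $y_1$ is then determined by the scalar equation
\[
h(y_1) := y_1 + \delta\bar{s}(y_1, y_2, y_3) = x_1.
\]
Since $h'(y_1) = 1+\partial_{y_1}\delta\bar{s}$ stays close to $1$ by smallness, $h$ is strictly increasing; and because the extension in \eqref{exp-extend} has compact support in $\sigma y_1 - y_2$, we have $h(y_1) = y_1$ for $|y_1|$ sufficiently large (with $y_2, y_3$ fixed), so $h\colon\R\to\R$ is a bijection. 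The intermediate value theorem then yields a unique $y_1$, producing the global inverse.

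Finally I would verify the boundary correspondence. The identification $y_2 = 0 \Leftrightarrow \bx\in\cw$ is immediate from \eqref{def-transy2y3}; and on the set $\{y_2 = \sigma y_1\}$, the property $\delta\bar{s}|_{\cs_0} = \delta\hat{s}$ from \eqref{est-extend} gives $x_1 = y_1 + \delta\hat{s}(\sigma y_1, y_3) = \hat{s}(y_2, y_3)$, which is precisely the shock surface $\cs$; the reverse implication follows from uniqueness in the scalar equation for $y_1$. Thus $\cE_0$ corresponds to $\cE$, and by connectedness $T$ maps $\Pi$ onto $\Omega_{\rm s}$, the orientation being pinned down by the background case $w\equiv 0$, $\delta\bar{s}\equiv 0$ where $T$ is the identity. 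The main technical point is ensuring that $h(y_1)=x_1$ admits a unique global solution for every $x_1\in\R$; this is exactly where the compactly supported cutoff built into the extension \eqref{exp-extend} is essential, since without it one would have to track decay of $\delta\bar{s}$ at infinity and the global intermediate value argument could fail.
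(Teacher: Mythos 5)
Your proposal is correct and follows the same overall strategy as the paper (Jacobian near the identity, then match boundaries), but you fill in two details the paper leaves terse. First, you compute the Jacobian determinant of the forward map $\by\mapsto\bx$ in closed form, observing the cancellation that yields exactly $1+\partial_{y_1}\delta\bar{s}$ regardless of $w$; the paper only says that the Jacobian equals $I_3$ in the flat case and hence remains invertible under smallness. Second, you construct the global inverse by hand via the monotone scalar equation $h(y_1)=x_1$, invoking the compact support in $\sigma y_1 - y_2$ built into the extension \eqref{exp-extend} to get surjectivity of $h$ — which is exactly what upgrades the paper's local statement to a genuine global bijection. Your boundary identification on $\{y_2=\sigma y_1\}$ is the same computation the paper does in the opposite direction (the paper starts on $\cs$ and deduces $\sigma y_1 - y_2 = 0$ via the implicit function theorem; you start from $\cs_0$ and recover $x_1=\hat{s}(y_2,y_3)$), and your orientation/connectedness argument gives a cleaner justification of the inclusion $0\le y_2\le \sigma y_1$ than the paper's unproved ``it is easy to see.''
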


\begin{proof}
$\,$ First, when $w=0$ and $\delta\hat{s}=0$, we know that the Jacobian matrix
of the coordinate transformation $J_{\bx}\by$ is the identity matrix $I_3$.
Thus, the coordinate transformation is invertible
when $\DD w$ and $\DD(\delta\hat{s})$ are small.
	
Next, it is direct from definition \eqref{def-transy2y3} that $y_2\geq0$.
Since it is easy to see that $y_2\leq \sigma y_1$,
it suffices to show that $y_2=\sigma y_1$ on the shock.
By the definition, we know that, on the shock,
$$
\delta\hat{s}+ \sigma^{-1}y_2=\hat{s}=x_1=y_1+\delta\bar{s}.
$$
The identity holds if and only if $y_2=\sigma y_1$, thanks to the implicit function theorem and
the fact that $\DD(\delta\hat{s})$ is sufficiently small in the $C^0$--norm.
This completes the proof.
\end{proof}

\subsection{$\,$ Iteration Scheme in the New Coordinates}
Based on Lemma \ref{lem:3.1}, it suffices to consider the problem in the fixed domain $\Pi$.
In the new $\mathbf{y}$-coordinates, we define
\begin{align*}
\Pi^-:=\big\{\mathbf{y}:\, y_1\geq0,\,\frac{2\sigma}{3}y_1\leq y_2\leq \frac{3\sigma}{2}y_1,\,y_3\in\mathbb{R}\big\}.
\end{align*}

Denote
\begin{align}
&\varphi_{\hs}^-(\mathbf{y})=\varphi^-(\mathbf{x}(\mathbf{y})),&& \gd \varphi_{\hs}^-(\mathbf{y})
   = \varphi_{\hs}^-(\mathbf{y})-\varphi^-_0(\mathbf{x}(\mathbf{y})) &&\mbox{ for $\by \in\Pi^-$},\label{def-dphiy-a}\\
&\varphi(\mathbf{y})=\varphi(\mathbf{x}(\mathbf{y})),&& \gd \varphi(\mathbf{y})
= \varphi(\mathbf{y})-\varphi^+_0(\mathbf{x}(\mathbf{y})) &&\mbox{ for $\by \in\Pi$}. \label{def-dphiy}
\end{align}
Define the iteration set $\mathcal{K}=\mathcal{K}_1 \times \mathcal{K}_2 $ by
\begin{align}
\mathcal{K}_1&:=\Big\{\gd \hs \in C^{2,\ga;(-\gb)}_{(-1-\ga)}(\cf_0)\,:\,\gd \hs(0,y_3)= e_1(y_3),\
               \|\gd \hs \|_{2,\ga;(-\gb);\cf_0}^{(-1-\ga)}\le C_0\ve\Big\},\label{def-K1}\\[1mm]
\mathcal{K}_2
&:=\Big\{ \gd \varphi \in  C^{2,\ga;(-\gb)}_{(-1-\ga)}(\Pi)\,:\,\|\gd \varphi\|_{2,\ga;(-\gb);\Pi}^{(-1-\ga)}\le C_0\ve\Big\}.\label{def-K2}
\end{align}
The estimate in \eqref{def-K1} for $\gd \hs$ guarantees that the shock surface in the $\by$--coordinates
stays in $\Pi^-$ if $\ve$ is sufficiently small.

In the $\mathbf{y}$--coordinates, \eqref{eqn-nondiv} becomes
\begin{equation*}
\sum_{i,j=1}^3\tilde{a}_{ij}^{\hs}(\by, \DD_{\by}(\gd \vf))\partial_{y_iy_j}(\gd \varphi)
+\sum_{i=1}^3\tilde{b}_i^{\hs}(\by, \DD_{\by}(\gd \vf))\partial_{y_i}(\gd \varphi)=0,
\end{equation*}
where
\begin{align}
&\tilde{a}_{ij}^{\hs}(\by, \DD_{\by}(\gd \vf))
=\sum_{k,m=1}^3a_{km}(\DD_{\bx}(\gd \vf) + U^+_0)\frac{\partial y_i}{\partial{x_k}}\frac{\partial y_j}{\partial{x_m}},\label{def-atilde}
\\
&\tilde{b}_i^{\hs}(\by, \DD_{\by}(\gd \vf))
  =\sum_{k,m=1}^3a_{km}(\DD_{\bx}(\gd \vf) + U^+_0)\frac{\partial^2 y_i}{\partial{x_k}\partial{x_m}},\label{def-btilde}\\
&\partial_{x_i}(\gd \vf) =\sum_{j=1}^3  \partial_{y_j}(\gd \vf)\frac{\partial y_j}{\partial{x_i}}, \qquad\,\, U^+_0=  \DD_{\bx}(  \vf_0^+(\bx)).
\end{align}
Hereafter, we write operator $\DD$ for $\DD_{\by}$ in the $\by$--coordinates when no confusion arises.

To solve the fixed boundary value problem above, we linearize the equation and the boundary conditions,
and then make careful uniform estimates required in order
to apply the Banach fixed point theorem.
More precisely, for any given function $(\gd \hs, \gd \vf)\in\mathcal{K}$,
we solve the following linear equation:
\begin{align}\label{eqn-vflinear}
\sum_{i,j=1}^{3}a_{ij}(U^+_0)(\gd \tvf)_{y_{i}y_{j}}= f^{\hs}(\DD (\gd \vf), \DD^2 (\gd \vf)),
\end{align}
where
\begin{align}\label{def-f}
f^{\hs}(\by, \DD (\gd \vf), \DD^2 (\gd \vf))= \sum_{i,j=1}^{3}\big(a_{ij}(U^+_0)-\tilde{a}_{ij}^{\hs}\big) (\gd \vf)_{y_{i}y_{j}}-\sum_{i=1}^{3}\tilde{b}_i^{\hs}(\gd \varphi)_{y_i}.
\end{align}

Condition \eqref{con-slip} on boundary  $\cw$ can be rewritten as
\begin{align*}
\DD_{\bx} (\gd \vf)\cdot \bn| _{\cw} = -U_0^+\cdot \bn = u_0 w_{x_1} +q_0\cos \theta_{\rm i} w_{x_3},
\end{align*}
or
\begin{align*}
\DD_{\by} (\gd \vf)J_{\bx}\by\cdot \bn| _{\cw_0} = -U_0^+\cdot \bn  \qquad
\,\,\mbox{in the $\by$--coordinates}.
\end{align*}
Set the condition on $\cw_0$ for equation \eqref{eqn-vflinear} as follows:
\begin{align}\label{con-wedgelinear}
(\gd \tvf)_{y_2}|_{\cw_0} = g_{\rm w}^{\hs}(\by ,\DD(\gd \varphi) )|_{\cw_0},
\end{align}
where
\begin{align}\label{def-gw}
 g_{\rm w}^{\hs}(\by ,\DD(\gd \varphi) )= (\gd \vf)_{y_2}-( \DD_{\by} (\gd \vf)J_{\bx}\by +U_0^+)\cdot \bn
\end{align}
with $\bn=(w_{x_1},-1,w_{x_3})$ and
\begin{align*}
w_{x_i}|_{\cw_0}= \DD_{\by}w(y_1 + \gd \ls(y_1,0,y_3), y_3)J_{\bx}\by \qquad\,\, \mbox{for $i=1,3$}.
\end{align*}

Finally, we rewrite condition \eqref{con-RH3} on the shock surface $\cs$ in the $\bx$--coordinates into
\begin{align}\label{con-gtildes}
\DD_{\bx}(\gd \vf)\cdot \bmu|_{\cs} = \tilde{g}_{\rm s}(\bx, \DD_{\bx}(\gd \varphi))|_{\cs}
\end{align}
with
\begin{align}
&\tilde{g}_{\rm s}(\bx, \DD_{\bx}(\gd \varphi))=  \DD_{\bx}(\gd \vf)\cdot \bmu - H(\DD_{\bx}\vf^-, \DD_{\bx}(\gd \varphi)+ U_0^+),\label{def-gstilde}\\
&\bmu =(\mu_1,\mu_2, \mu_3)=H_{\bv}(\DD_{\bx}\vf^-_0(\bx), \DD_{\bx}\vf_0^+(\bx)).\label{def-mu}
\end{align}
We write condition \eqref{con-gtildes} in the $\by$--coordinates as
\begin{align}\label{con-gs}
\DD_{\by}(\gd \vf)J_{\bx}\by\cdot \bmu|_{\cs_0} = \tilde{g}_{\rm s}(\bx (\by), \DD_{\by}(\gd \varphi)J_{\bx}\by)|_{\cs_0}.
\end{align}
Thus, we impose the following oblique derivative boundary condition:
\begin{align}\label{con-shocklinear}
\DD_{\by}(\gd \tvf)\cdot \bmu|_{\cs_0} = g_{\rm s}^{\hs}(\by, \DD_{\by}(\gd \varphi))|_{\cs_0},
\end{align}
where
\begin{align}\label{def-gs}
 g_{\rm s}^{\hs}(\by, \DD_{\by}(\gd \varphi))&=\DD_{\by}(\gd \vf)(I-J_{\bx}\by)\cdot \bmu +\tilde{g}_{\rm s}(\bx (\by), \DD_{\by}(\gd \varphi)J_{\bx}\by).
\end{align}

In order to keep the shock surface attached to edge $\cE$, one of the main ingredients in our new approach
is to impose an extra condition on $\cE$:
\begin{align}\label{con-edge}
(\tvf -  \vf^-)|_{\cE} = 0,
\end{align}
which can be written as
\begin{align}\label{con-edge1}
\gd \tvf|_{\cE_0} =g_{\rm e}^{\hs}
\end{align}
in the $\by$--coordinates, where
\begin{align}\label{def-ge}
g_{\rm e}^{\hs}(y_3)=\big(\vf^-(\bx(\by)) -\vf^+_0(\bx(\by))\big)(0, 0,y_3).
\end{align}

Denote
\[
a_{ij}^0 = a_{ij}(U^+_0).
\]
We first solve the following linear problem:

\begin{prob}[Linearized Fixed Boundary Problem]\label{prob-linear}
$\,$ Given functions $f_1$ defined in $\Pi$, $ g_1$ in $\cd_0$, $g_2$ in $\cf_0$, and $g_3$ in $\R$, solve the equation{\rm :}
\begin{align}\label{eqn-shocklinear}
\sum_{i,j=1}^{3}a_{ij}^0 v_{y_{i}y_{j}}= f_1 \qquad\,\mbox{in $\Pi$}
\end{align}
with the boundary conditions{\rm :}
\begin{align}
&v_{y_2}|_{\cw_0} = g_1, \label{con-wg1}\\
&\DD v\cdot \bmu|_{\cs_0} = g_2,\label{con-sg2} \\
& v|_{\cE_0} =g_3.\label{con-eg3}
\end{align}	
\end{prob}

We solve {Problem \ref{prob-linear}}
by proving the following theorem in \S 4:

\begin{thm}\label{lem-linear}
$\,$ Assume equation \eqref{eqn-shocklinear} is uniformly elliptic, $\mu_1 >0$,  $\mu_2 >0$,
and  $\bmu\cdot\bn_{\rm sh}>0$,
where $\bn_{\rm sh}$ is the outer unit normal of the shock surface $\cs_0$.
Suppose that the angle between $\cw_0$ and $\cs_0$ is $\omega \in (0,\frac{\pi}{2})$.
Then there are $\ga,\gb \in (0,1)$ depending on $(a_{ij}^0, \bmu, \omega)$ such that,
if
$f_1 \in C^{0,\ga;(2-\gb)}_{(1-\ga)}(\Pi), g_1\in C^{1,\ga;(1-\gb)}_{(-\ga)}(\cd_0), g_2 \in C^{1,\ga;(1-\gb)}_{(-\ga)}(\cf_0)$,
and $g_3 \in C^{1,\ga}(\R)$,
there exists a unique  solution $v \in C^{2,\ga;(-\gb)}_{(-1-\ga)}(\Pi) $ of {Problem {\rm \ref{prob-linear}}}
with the following estimate{\rm :}
\begin{align}\label{est-v}
\|v\|_{2,\ga;(-\gb);\Pi}^{(-1-\ga)}
\le C \Big(\|f_1\|_{0,\ga;(2-\gb);\Pi}^{(1-\ga)} + \|g_1\|_{1,\ga;(1-\gb);\cd_0}^{(-\ga)}
+ \|g_2\|_{1,\ga;(1-\gb);\cf_0}^{(-\ga)} +  \|g_3\|_{1,\ga;\R}\Big),
\end{align}	
where $C>0$ is a constant depending on  $(a_{ij}^0, \bmu, \omega,\ga,\gb)$.
\end{thm}

\begin{rem}
$\,$ Propositions {\rm \ref{prop:5.1a}}--{\rm \ref{prop:5.2a}} later will guarantee
the assumptions in Theorem {\rm \ref{lem-linear}}
for the weak transonic shock.
In fact, the non-obliqueness assumption, {\it i.e.}, $\mu_1>0$ and $\mu_2>0$ in Theorem {\rm \ref{lem-linear}},
allows us to assign the boundary condition \eqref{con-eg3} on the wedge edge,
which means that the shock is an attached shock.
It is the key difference from the strong transonic shock, where the non-obliqueness assumption fails.
That is the mathematical reason why we expect the weak transonic shock is stable {\rm \cite{CF}},
but the strong transonic shock is unstable {\rm \cite{LXY}}, for the attached plane shock over a three-dimensional flat wedge
with respect to the three-dimensional perturbations.
\end{rem}

\begin{rem}
$\,$ {Problem {\rm \ref{prob-linear}}} is for the linear stability of {Problem {\rm \ref{prob}}}.
Thus, if Theorem {\rm \ref{lem-linear}} is proved, then we conclude that {Problem {\rm \ref{prob}}} is linearly stable.
\end{rem}

It follows from Theorem \ref{lem-linear} that,
given $(\gd \hs, \gd \vf)\in\mathcal{K}$,
we solve {Problem \ref{prob-linear}} with
$v=\gd \tvf, f_1=
f^{\hs}$, $g_1=g_{\rm w}^{\hs}
$, $g_2=g_{\rm s}^{\hs}$, and $g_3=g_{\rm e}^{\hs}$.

\smallskip
Now, we use condition \eqref{con-RH1} to update the shock function in the following way:
Solve the equation for $x_1$:
\begin{align}\label{eqn-updateshock}
(\vf^- -\vf^+_0)(x_1, y_2+w(x_1,y_3),y_3) = \gd \tvf(\sigma^{-1}y_2,y_2,y_3)
\end{align}	
by the implicit function theorem to obtain $x_1 = \ts(y_2,y_3)$.
Set $\gd \ts:= \ts - s_0$.
Define a map $\ft$ on $\mathcal{K} $ by
 \begin{equation}\label{def:T}
\ft(\gd \hs, \gd \vf)=(\gd \ts, \gd \tvf).
\end{equation}

We will show that the assumptions in Theorem \ref{lem-linear} hold and establish
the required uniform estimates for applying the Banach fixed-point
theorem
to ensure the existence of a fixed point
of map $\ft$ in \S \ref{sec-6} to conclude the proof of Theorem \ref{thm-main}.

\section{$\,$ Linear Stability: Proof of Theorem \ref{lem-linear}}

Without loss of generality, in {Problem \ref{prob-linear}},
we may assume
\begin{align}\label{con-g=0}
& g_1(0,y_3)= g_2(0,y_3) = g_3(y_3) \equiv 0,
\end{align}
where $g_1$, $g_2$, and $g_3$ are in the boundary conditions \eqref{con-wg1}--\eqref{con-eg3}.

Indeed, we can extend $g_3$ from domain $\cE_0 $ to $\Pi$ (\textit{cf.} Lemma 2.5 in \cite{GH}).
Denote the extended function by $\tilde{g}_3$ so that
\begin{align*}
&\mathrm{supp}\,\tilde{g}_3 \subset \big\{\by\in \Pi: y_1^2 +y_2^2 \le 1\big\},\quad (\tilde{g}_3 )_{y_2}|_{\cE_0}  = g_1|_{\cE_0} ,
\quad \DD \tilde{g}_3 \cdot \bmu|_{\cE_0}  = g_2|_{\cE_0}, \\[1mm]
&\| \tilde{g}_3 \|_{2,\ga;(-\gb);\Pi}^{(-1-\ga)}
\le C \Big(\|g_1\|_{1,\ga;(1-\gb);\cd_0}^{(-\ga)}  + \|g_2\|_{1,\ga;(1-\gb);\cf_0}^{(-\ga)}+  \|g_3\|_{1,\ga;\R}\Big).
\end{align*}
Then assumption \eqref{con-g=0} is satisfied if {Problem \ref{prob-linear}} is solved for $\tilde{v} = v- \tilde{g}_3$.

To solve {Problem \ref{prob-linear}}, we truncate domain $\Pi$ by  a ball $B_R(\bO)$,
centered at $\bO$ with radius $R$, so that we can work on a finite domain.
Furthermore, since $\mu_1 >0$ and $\mu_2 >0$,
conditions  \eqref{con-wg1}--\eqref{con-sg2} are not oblique at the wedge edge $\cE_0$ (\textit{cf.} \cite{Lieberman2}).
In order to resolve this difficulty, we also truncate the wedge edge.
It is convenient to use the cylindrical coordinates $(r, \theta, y_3)$ for the truncation and the estimates later.
More precisely, the truncation is given as follows:
\begin{align*}
\begin{split}
&\Pi^{R} = \big\{\by \in \Pi\cap B_R(\bO)\,:\, r>R^{-1} \big\},\quad \cw^{R} = \big\{\by \in \cw_0\cap B_R(\bO)\,:\,r> R^{-1}\big\},\\
& \cs^R =  \big\{\by \in \cs_0\cap B_R(\bO)\,:\, r >R^{-1}\big\},
\quad \ct^R = \partial \Pi^R \backslash {\big(\cw^R \cup \cs^R\big)},
\end{split}
\end{align*}
where  $R >4$.

\smallskip
Now we first solve the following problem in the truncated domain $\Pi^{R}$:

\begin{prob}[Problem in Truncated Domains]\label{prob-trunc}
$\,$ Given $f_1$, $g_1$, and $g_2$ as in {Problem {\rm \ref{prob-linear}}} with assumption \eqref{con-g=0},
solve the boundary value problem{\rm :}
\begin{align}\label{eqn-vlinear}
\sum_{i,j=1}^{3}a^0_{ij} v_{x_ix_j} = f_1  \qquad \text{in $\Pi^R$}
\end{align}
with boundary conditions{\rm :}
\begin{align}
& v_{x_2}|_{\cw^R} = g_1, \label{con-bdg1}\\
&\DD v\cdot \bmu|_{\cs^R} = g_2, \label{con-bdg2}\\
& v|_{\ct^R} = 0.\label{con-ct}
\end{align}
\end{prob}

For {Problem \ref{prob-trunc}}, we have the following lemma.

\begin{lem} \label{lem1}
Under the same assumptions for $a_{ij}^0$, $\bmu$, and $\omega$ as in Theorem {\rm \ref{lem-linear}},
there are $\ga,\gb \in (0,1)$  depending on $(a_{ij}^0, \bmu, \omega)$ such that,
for the same functions $(f_1, g_1, g_2)$ as in Theorem {\rm \ref{lem-linear}} with assumption \eqref{con-g=0},
there exists a unique solution  $v \in C^{2,\ga}(\Pi^R) \cap C^0(\overline{\Pi^R})$
for {Problem {\rm \ref{prob-trunc}}} satisfying the following estimate{\rm :}
\begin{align}\label{est-vtrunc}
\|v\|_{2,\ga;(-\gb);\Pi^{\frac{R}{2}}}^{(-1-\ga)}
\le C \Big(\|f_1\|_{0,\ga;(2-\gb);\Pi}^{(1-\ga)} + \|g_1\|_{1,\ga;(1-\gb);\cd_0}^{(-\ga)}  + \|g_2\|_{1,\ga;(1-\gb);\cf_0}^{(-\ga)}\Big),
\end{align}	
where $C>0$ is a constant depending on  $(a_{ij}^0, \bmu, \omega, \ga, \gb)$,
but independent of $R$, and the weights for the superscripts in \eqref{est-v} are to the wedge edge $\cE_0$.
\end{lem}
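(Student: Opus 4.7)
My plan for Lemma \ref{lem1} is first to establish $C^{2,\ga}$ solvability in the truncated domain $\Pi^R$ via the method of continuity, and then to derive the $R$-independent weighted estimate \eqref{est-vtrunc} by a localization and rescaling argument keyed to the two length scales encoded in \eqref{def-normk0}--\eqref{def-norm}. For existence, $\partial \Pi^R$ decomposes into two smooth oblique pieces $\cw^R, \cs^R$ (uniformly oblique thanks to $\mu_2>0$ and $\bmu\cdot\bn_{\rm sh}>0$) and the Dirichlet piece $\ct^R$, meeting along curves on which the normalized data $g_1, g_2$ vanish by \eqref{con-g=0}. I would homotope from the pure-Dirichlet problem (solvable by Perron--Schauder theory on the Lipschitz domain $\Pi^R$) to our mixed problem, propagating a priori $C^{2,\ga}$ bounds along the path via the Lieberman--Gilbarg Schauder estimates for mixed Dirichlet--oblique boundary conditions, with uniqueness at each stage following from the strong maximum principle together with Hopf's lemma applied on $\cw^R$ and $\cs^R$.

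For the weighted estimate, I would fix $\bx_0 \in \Pi^{R/2}$, set $\rho_0 = \tfrac{1}{16}\min(\gd_{\bx_0}, \gD_{\bx_0})$, and analyze $\tilde v(\by) = v(\bx_0 + \rho_0 \by)$ on the unit ball. Depending on the position of $\bx_0$, the rescaled function solves one of four canonical local problems for a frozen-coefficient operator: an interior ball, a half-ball with one oblique boundary condition from $\cw_0$ or $\cs_0$, or a dihedral sector of opening $\omega$ between $\cw_0$ and $\cs_0$ carrying both oblique conditions (the last only as $\bx_0 \to \cE_0$). A key geometric observation is that, for $\bx_0 \in \Pi^{R/2}$, the choice $\rho_0 \le \gd_{\bx_0}/16$ keeps $B_{\rho_0}(\bx_0)$ strictly away from the inner cylindrical part of $\ct^R$, so the Dirichlet truncation never enters any local model and the resulting constants are independent of $R$. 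Summing the rescaled Schauder bounds and unwinding the scaling reproduces each seminorm on the left of \eqref{est-vtrunc}, with the $\gD_{\bx_0}$ factor absorbing the conical rescaling at infinity.

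The principal obstacle is the dihedral-corner model near $\cE_0$. Because $\mu_1>0$ and $\mu_2>0$, neither boundary operator $\partial_{y_2}$ nor $\bmu\cdot D$ has a tangential component along the edge, so the classical tangential-oblique corner theory degenerates. I would bypass this by passing to the Kondratiev framework on the model two-dimensional sector of opening $\omega \in (0,\pi/2)$ transverse to the edge: from the indicial equation for the frozen operator $\sum a_{ij}^0 \partial_{ij}$ paired with the projected boundary vectors $(0,1)$ and $(\mu_1,\mu_2)$, the first critical exponent $\lambda_1(a_{ij}^0, \bmu, \omega)$ turns out to be strictly positive precisely because $\omega < \pi/2$ together with $\mu_1, \mu_2 > 0$ forces both boundary operator directions to point strictly into the sector. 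Selecting $\ga, \gb \in (0,1)$ with $1+\ga < \lambda_1$ and $\gb < \lambda_1$ then closes the weighted $C^{2,\ga}$ Schauder theory on the dihedral model, which delivers the admissible range of exponents stated in the lemma and, combined with the preceding rescaling scheme, completes the proof.
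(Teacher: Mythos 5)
Your overall architecture matches the paper's: get a solution of the truncated problem, obtain a pointwise decay estimate near the edge and at infinity, and then upgrade to the weighted $C^{2,\ga}$ norm via local Schauder estimates and rescaling (your "four canonical local problems" are exactly the paper's three cases of interior and boundary Schauder estimates). For existence, the paper simply invokes Lieberman's Theorem 1 in \cite{Lieberman1} for the mixed Dirichlet--oblique problem, so your continuity-method homotopy from the Dirichlet problem is correct but more elaborate than necessary. The substantive divergence is in how you produce the key pointwise decay estimate at the edge: the paper constructs explicit barrier functions $v^*=\bar r^l\sin(t\bar\theta+\theta_0)$ in the rescaled coordinates and runs a comparison-principle argument, whereas you appeal to the Kondratiev/indicial-equation framework on the transverse two-dimensional sector. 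Both routes aim at the same corner exponent, but the barrier argument is self-contained and elementary; the paper explicitly advertises that its method is ``based on neither the hodograph transformation nor the spectrum analysis,'' so your Kondratiev route is precisely the machinery the authors set out to avoid.

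There is, moreover, a genuine gap in your corner argument. You claim that ``the first critical exponent $\lambda_1$ \dots turns out to be strictly positive,'' and then propose to ``select $\ga,\gb\in(0,1)$ with $1+\ga<\lambda_1$.'' But positivity of $\lambda_1$ is not enough to make such a choice: you need $\lambda_1>1$. That strict inequality $\lambda_1>1$ is exactly the nontrivial content of the $C^{1,\ga}$ regularity up to the edge, and obliqueness alone (both boundary operators pointing into the sector) does not give it --- for a general oblique pair on a sector of opening $\omega<\pi/2$ the first indicial root can be anywhere in $(0,\pi/(2\omega))$. What forces $\lambda_1>1$ here is the finer structure encoded by $\mu_1>0$ and $\mu_2>0$: after diagonalizing $a_{ij}^0$, the shock boundary operator makes angle $\Phi=\arctan(\mu_2 d_2/\mu_1 d_1)\in(0,\pi/2)$ with the $\bar y_1$-axis and the obliqueness condition $\bmu\cdot\bn_{\rm sh}>0$ gives $\Phi>\bar\omega$; the Neumann--oblique indicial equation $\tan(\lambda\bar\omega)=\cot(\Phi-\bar\omega)$ then yields $\lambda_1=1+(\pi/2-\Phi)/\bar\omega>1$ \emph{only because} $\Phi<\pi/2$, i.e.\ $\mu_1>0$. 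The paper obtains this precisely by checking the comparison inequalities for the barrier $v_2=CM\bar r^{1+\ga}\sin((1+\ga+\tau_1)\bar\theta+\pi/2+\tau_1)$ in \eqref{est-v2domain}--\eqref{est-v2s0}; the term $\sin(\min(\Phi,\pi/2-\Phi))>0$ in \eqref{est-v2s0} is where $\mu_1>0$, $\mu_2>0$ actually enter. If you intend to take the Kondratiev route, you must carry out this indicial-root computation and verify $\lambda_1>1$; asserting $\lambda_1>0$ does not close the argument. You should also be explicit that the same framework is supplying the $r^\gb$ decay bound as $r\to\infty$ (the paper does this with a second barrier $v_1\propto\bar r^\gb$), since the weighted norm requires both endpoint behaviors.
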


\begin{proof}
$\,$ We divide the proof into three steps.

\medskip
1. $\,$ Let
\begin{align*}
C^*(\Pi^R):= C^0(\overline{\Pi^{R}}) \cap C^2(\Pi^{R} \cup \cw^{R}\cup \cs^{R}).
\end{align*}
By Theorem 1 in \cite{Lieberman1}, there is a unique solution
$v_R\in C^*(\Pi^{R})$ for {Problem \ref{prob-trunc}}.
Then, to prove Lemma \ref{lem1}, it suffices to obtain the uniform estimate
\eqref{est-vtrunc}.

\smallskip
2. $\,$ Let
\begin{align}\label{def-M}
M := \|f_1\|_{0,\ga;(2-\gb);\Pi}^{(1-\ga)} + \|g_1\|_{1,\ga;(1-\gb);\cd_0}^{(-\ga)}  + \|g_2\|_{1,\ga;(1-\gb);\cf_0}^{(-\ga)}.
\end{align}
Then we need the following estimate, independent of $R$:
\begin{align}\label{est-vdecay}
|v_R(\by)| \le C M \min (r^{1+\ga}, r^\gb) \qquad\mbox{for $\by \in \Pi^{R}$}.
\end{align}

To achieve estimate \eqref{est-vdecay}, we scale $(y_1,y_2)$ into the following
coordinates $(\bar{y}_1, \bar{y}_2)$:
\begin{align}\label{def-coords}
(\bar{y}_1,\bar{y}_2) = (d_1y_1, d_2 y_2)
\qquad \mbox{with $(\bar{r}, \bar{\theta}) =((\bar{y}_1^2 + \bar{y}_2^2)^{\frac{1}{2}}, \arctan (\frac{\bar{y}_2 }{ \bar{y}_1}))$},
\end{align}
where $(d_1,d_2)=((a^0_{11})^{-\frac{1}{2}}, (a^0_{22})^{-\frac{1}{2}})$.

Set the comparison function in the following form:
\begin{align}\label{def-vstar}
v^* = \bar{r}^l \sin (t \bar{\theta} + \theta_0),
\end{align}
where $l$, $t$, and $\theta_0 >0 $ will be determined later such that $v^*$ is a barrier function.

A direct calculation shows
\begin{align*}
\sum_{i,j=1}^{3}a^0_{ij} v^*_{y_iy_j}={}& (\partial_{\bar{y}_1}^2 + \partial_{\bar{y}_2}^2 ) v^*= (l^2 -t^2) {\bar{r}}^{l-2}\sin (t \bar{\theta} + \theta_0),\\
v^*_{y_2}|_{\cw_0} ={} \bar{r}^{l-1}&d_2 t \cos \theta_0,\\[1mm]
\DD v^*\cdot \bmu|_{\cs_0}={} &\bar{r}^{l-1}\mu_1d_1\big(l \cos \bar{\omega} \sin (t \bar{\omega} + \theta_0 ) - t \sin \bar{\omega} \cos  (t \bar{\omega} + \theta_0 )\big)\\
& +\bar{r}^{l-1}\mu_2d_2\big(l \sin \bar{\omega} \sin (t \bar{\omega} + \theta_0 ) + t \cos \bar{\omega} \cos  (t \bar{\omega} + \theta_0 )\big)\\[1mm]
={} & \bar{r}^{l-1} \sqrt{\mu_1^2d_1^2 +\mu_2^2d_2^2}
\big((l-t)\cos (\bar{\omega}- \Phi)\sin (t \bar{\omega} + \theta_0 )
  + t \sin ( (t-1)\bar{\omega} + \theta_0 +\Phi )\big),
\end{align*}
where
\begin{align*}
\bar{\omega}= \arctan (\frac{d_2}{d_1} \tan \omega), \qquad
\Phi = \arctan (\frac{\mu_2 d_2}{\mu_1 d_1}).
\end{align*}

First, for a fixed $\gb \in (0,1)$, choose $l = \gb$, $t = \gb + \tau_0$, and $\theta_0 = \frac{\pi}{2} + \tau_0$ in \eqref{def-vstar}, and set
\begin{align*}
v_1 = CM v^* = CM \bar{r}^\gb \sin ((\gb + \tau_0) \bar{\theta} + \frac{\pi}{2} + \tau_0),
\end{align*}
where $\tau_0 >0$ is suitably small and $C$ sufficiently large, depending on  $(a_{ij}^0, \bmu, \omega, \gb)$.

Since $\bar{\omega}\in(0,\frac{\pi}{2})$, which follows from $\tan\omega>0$ and the fact that $d_1$ and $d_2$ are positive,
we can find $\tau_0>0$ and $\beta>0$ sufficiently small such that
$(\beta+\tau_0)\bar{\theta}+\frac{\pi}{2}+\tau_0\in(\frac{\pi}{2}+\tau_0,\pi-\tau_0)$ in $\Pi^{R}$.
Thus, following the computation above, we have
\begin{align}\label{est-v1domain}
&\sum_{i,j=1}^{3}a^0_{ij} (v_1)_{y_iy_j} \le{} -CM\bar{r}^{\gb-2}\tau_0^2\sin \tau_0 \le f_1
  = \sum_{i,j=1}^{3}a^0_{ij} (v_R)_{y_iy_j} \qquad \text{ in $\Pi^{R}$},\\
&(v_1)_{y_2}|_{\cw_0} \le {}  -CM\bar{r}^{\gb-1}d_2 \gb \sin \tau_0 \le g_1  = (v_R)_{y_2}|_{\cw_0},\\
&\DD v_1\cdot \bmu|_{\cs_0} \ge {}  CM \bar{r}^{\gb-1}\gb\sin (\min(\Phi, \frac{\pi}{2} - \Phi)) \ge g_2 = \DD v_R\cdot \bmu|_{\cs_0}, \label{est-v1s0}\\
&v_1|_{\ct^{R}} \ge{} 0   =v_R|_{\ct^{R}}.
\end{align}
Therefore, by the comparison principle, we conclude
\begin{align} \label{est-rgb}
|v_R(\by)| \le CM r^\gb \qquad\,\,
\mbox{for any $\by \in \Pi^{R} $}.
\end{align}

Second, we now show the estimate of solution $v_R(\by)$ related to the $C^{1, \alpha}$--regularity up to the wedge edge,
thanks to the assumptions that $\mu_1>0$ and $\mu_2>0$.
Choose $l=1+\ga$, $t = 1+\ga + \tau_1$ , and $\theta_0 = \frac{\pi}{2} + \tau_1$ in \eqref{def-vstar},
where $\alpha$ and $\tau_1$ are sufficiently small and positive constants depending on  $(a_{ij}^0, \bmu, \omega)$.
Define the following barrier function:
\begin{align*}
v_2= CM v^* = CM \bar{r}^{1+\ga} \sin ((1+\ga + \tau_1) \bar{\theta} + \frac{\pi}{2} + \tau_1).
\end{align*}

Following the computation argument from \eqref{def-vstar} to \eqref{est-v1domain}, we have
\begin{align}\label{est-v2domain}
&\sum_{i,j=1}^{3}a^0_{ij} (v_2)_{y_iy_j} \le{} -CM\bar{r}^{\ga-1}\tau_1^2\sin \tau_1 \le f_1
   = \sum_{i,j=1}^{3}a^0_{ij} (v_R)_{y_iy_j} \qquad \text{ in $\Pi^{R}$},\\
&(v_2)_{y_2}|_{\cw_0} \le {}  -CM\bar{r}^{\ga}d_2 \ga \sin \tau_0 \le g_1  = (v_R)_{y_2}|_{\cw_0},\\[1mm]
&\DD v_2\cdot \bmu|_{\cs_0} \ge {}  CM \bar{r}^{\ga}\ga\sin (\min(\Phi, \frac{\pi}{2} - \Phi)) \ge g_2 = \DD v_R\cdot \bmu|_{\cs_0}, \label{est-v2s0}\\
&v_2|_{\ct^{R}} \ge{} 0   =v_R|_{\ct^{R}}.
\end{align}
Thus, $v_2$ meets the conditions for the comparison principle so that
\begin{align} \label{est-r1ga}
|v_R(\by)| \le CM r^{1+\ga}\qquad\mbox{for any $\by \in \Pi^{R} $}.
\end{align}
Combining \eqref{est-rgb} with \eqref{est-r1ga} leads to estimate \eqref{est-vdecay}.

We remark that, for estimate \eqref{est-v2s0},
we use the assumptions that $\mu_1$ and $\mu_2$ are positive such that $\Phi\in(0,\frac{\pi}{2})$.
Since the assumptions are not correct for the strong transonic shock,
we cannot expect \eqref{est-r1ga} and then cannot expect the $C^{1,\ga}$--regularity of solutions
of the strong transonic shock near the edge generally.

3. $\,$ Based on estimate \eqref{est-vdecay},
the standard Schauder estimates, and the scaling argument lead to \eqref{est-vtrunc}.
We now sketch the proof for the self-containedness.

Let $E$ be a bounded domain, and let $u$ be a function defined on $E$.
Define the following norms:
\begin{align*}
\| u\|'_{k;E} =\sum_{j=0}^k d^j[ u]_{j,0;E}, \qquad
\| u\|'_{k,\ga;E}=\| u\|'_{k; E}+ d^{k+\ga}[ u]_{k,\ga;E},
\end{align*}
where $d= \mbox{diam}\, E$.

For any point $\by^0 \in \Pi^{R/2} $ with cylindrical coordinates $(r_0, \theta_0, y_3)$, it falls into one of the following three cases:
\begin{flalign*}
\textit{Case {\rm 1}.} & \quad \frac{\omega}{4} \le \theta_0  \le \frac{3 \omega}{4},&\\
\textit{Case {\rm 2}.} &\quad  \frac{3\omega}{4}< \theta_0  <\omega_0,&\\
\textit{Case {\rm 3}.}& \quad  0 < \theta_0 < \frac{\omega_0}{4}.&
\end{flalign*}

For \textit{Case {\rm 1}}, let
\begin{align*}
\hat{r} = \frac{r_0}{4} \sin \frac{\omega}{4}, \quad
B_1 = B_{\hat{r}}(\by^0),\quad
B_2 = B_{2\hat{r}} (\by^0).
\end{align*}
By the definition, it is easy to see that
$B_1\subset B_2\subset\Pi^{R}$.
By the Schauder interior estimate (\textit{cf.} Theorem 4.6 in \cite{gt}),
for solution $v\in C^{2,\ga}(\Pi^R)\cap C^0(\overline{\Pi^R})$ of {Problem \ref{prob-trunc}}, we have
\begin{equation}\label{est-vinterior}
\|v \|'_{2,\ga; B_1} \le C\big(\|v \|_{0,0;B_2} + \hat{r}^2 \|f_1\|'_{0,\ga; B_2}\big).
\end{equation}
The definition of $M$ (see \eqref{def-M}) and assumption \eqref{con-g=0} imply
\begin{align}\label{est-f1ga}
 &\|f_1\|'_{0,\ga; B_2} \le C M \min(r_0^{ \ga -1},r_0^{\gb-2}),\\
 &\|g_1\|'_{1,\ga; B_2} +  \|g_2\|'_{1,\ga; B_2} \le C M \min(r_0^{ \ga },r_0^{\gb-1}).\label{est-g1g2ga}
\end{align}
Estimates  \eqref{est-vinterior}--\eqref{est-f1ga}  and  \eqref{est-vdecay} give rise to
\begin{equation}\label{est-v2ga}
\|v \|'_{2,\ga; B_1}  \le  CM  \min(r_0^{1+\ga},r_0^\gb ).
\end{equation}

For {\it Case {\rm 2}} and  {\it Case {\rm 3}}, we use the Schauder boundary estimates.
Let
\begin{align*}
\hat{r} =  \sin \frac{\omega}{4}, \quad
B_1^+ = B_{\hat{r}}(\by^0) \cap \Pi^R,\quad
B_2^+ = B_{2\hat{r}} (\by^0)\cap \Pi^R,\quad
T= B_{2\hat{r}} (\by^0)\cap \partial \Pi^R.
\end{align*}
Similar to the arguments in {\it Case {\rm 1}},
the Schauder boundary estimates (\textit{cf.} Theorem 6.26 in \cite{gt}),
together with \eqref{est-vdecay} and \eqref{est-f1ga}--\eqref{est-g1g2ga},
lead to
\begin{align}\label{est-vbdry}
\|v \|'_{2,\ga; B_1^+} \le C\Big(\|v \|_{0,0;B_2^+} +\hat{r}\sum_{i=1,2} \|g_i\|'_{1,\ga;T}+ \hat{r}^2 \|f_1\|'_{0,\ga; B_2^+}\Big)
\le C M \min(r_0^{ \ga },r_0^{\gb-1}).
\end{align}
Note that, by the standard scaling argument,
$$
\|v\|_{2,\ga;(-\gb);B_1}^{(-1-\ga)}\leq\frac{1}{\min(r_0^{ \ga },r_0^{\gb-1})} \|v \|'_{2,\ga; B_1},
$$
or
$$
\|v\|_{2,\ga;(-\gb);B_1^+}^{(-1-\ga)}\leq\frac{1}{\min(r_0^{ \ga },r_0^{\gb-1})} \|v \|'_{2,\ga; B_1^+}.
$$
Therefore, estimate \eqref{est-vtrunc} follows by combining the interior estimates \eqref{est-v2ga} for {\it Case {\rm 1}}
with the boundary estimates \eqref{est-vbdry} for {\it Cases {\rm 2}--{\rm 3}}.
This completes the proof.
\end{proof}

Now we are ready to prove Theorem \ref{lem-linear}.

\smallskip
\noindent
{\it Proof of Theorem {\rm \ref{lem-linear}}}.
$\,\,$ For each $R >4$, by Lemma \ref{lem1}, there exists a unique solution $v_R$ for {Problem \ref{prob-trunc}}
satisfying estimate \eqref{est-vtrunc}.
Therefore, by the Ascoli-Azela theorem, we can choose a sequence $R_k \to \infty$ such that the corresponding sequence
of solutions $\{v_{R_k}\}$ converges to a function $v$ in each $C^{2,\ga';(-\gb)}_{(-1-\ga')}(\Pi^{\frac{R_k}{2}})$ for $0 < \ga' <\ga$.
Hence, estimate \eqref{est-vtrunc} indicates that $v \in C^{2,\ga;(-\gb)}_{(-1-\ga)}(\Pi)$ and satisfies estimate \eqref{est-v}.
Clearly, $v$ is a solution of {Problem \ref{prob-linear}} in $\Pi$.

To show the uniqueness of solutions of {Problem \ref{prob-linear}},
we need to prove that, if $v\in  C^{2,\ga;(-\gb)}_{(-1-\ga)}(\Pi) $ and solves
\begin{align*}
\sum_{i,j=1}^{3}a^0_{ij} v_{x_ix_j} = 0  \qquad \text{ in $\Pi$}
\end{align*}
with boundary conditions:
\begin{align*}
v_{x_2}|_{\cw_0} = 0,\quad 	\DD v\cdot \bmu|_{\cs_0} = 0,
\quad v|_{\cE_0} = 0,
\end{align*}
then $v=0$.
We now construct a barrier function and use the comparison principle to achieve this.
It is based on the observation that the solution is uniformly bounded in the $x_3$--direction
and sublinearly grows
in the $(x_1,x_2)$--directions.

For $\gb' \in (\gb,1)$, set
\begin{align*}
v_3 =   \bar{r}^{\gb'} \sin ((\gb' + \tau_2) \bar{\theta} + \frac{\pi}{2} + \tau_2),
\end{align*}
where $\tau_2 >0$ is suitably small.
From estimates \eqref{est-v2domain}--\eqref{est-v2s0}, we have
\begin{align}
&\sum_{i,j=1}^{3}a^0_{ij} (v_3)_{y_iy_j} \le{} -c_1 r^{\gb'-2} \qquad \text{ in $\Pi$}, \label{est-v3domain}\\
&(v_3)_{y_2}|_{\cw_0} \le {}  -c_2 r^{\gb'-1},\\
&\DD v_3\cdot \bmu|_{\cs_0} \ge {}  c_3 r^{\gb'-1}.
\end{align}
Let $v_4 = |\by|^{\gb'}$. It is easy to see the following estimates:
\begin{align}
|\DD_{y_i} v_4 | \le C |\by|^{\gb'-1}, \qquad |\DD^2_{y_iy_j} v_4 | \le C |\by|^{\gb'-2} . \label{est-v4D}
\end{align}
Given any $\tau >0$, define
\begin{align*}
v_5 =   \tau(C_1 v_3 + v_4).
\end{align*}
With estimates \eqref{est-v3domain}--\eqref{est-v4D} and the fact that $r \le |\by|$,
we know that $v_5$ satisfies the following properties:
\begin{align*}
&\sum_{i,j=1}^{3}a^0_{ij} (v_5)_{x_ix_j} \le  0 \qquad \text{ in $\Pi$}, \\
&(v_5)_{x_2}|_{\cw_0} \le 0, \quad
\DD v_5\cdot \bmu|_{\cs_0} \ge0,\quad   v_5|_{\cE_0} \ge 0,
\end{align*}
provided that $C_1$ is suitably large.
We know
\begin{align*}
v_5(\by) \ge \tau v_4 \ge  \tau |\by|^{\gb' - \gb} r^\gb,
\end{align*}
so that $v\in  C^{2,\ga;(-\gb)}_{(-1-\ga)}(\Pi) $ implies that
there exists $C_2 >0$ such that
\begin{align*}
|v(\by)|\le C_2 r^\gb.
\end{align*}
Since $\gb' >\gb$, there exists $R_0 >0$ depending on $(\tau, C_2)$ such that, when $R>R_0$,
\begin{align*}
v_5 (\by)> |v(\by)| \qquad \text{ for $|\by| =R$}.
\end{align*}
Thus, by the comparison principle, we have
\begin{align*}
|v(\by)|  \le  v_5 (\by) \qquad \mbox{for $ \by  \in \Pi \cap B_R(\bO)$}.
\end{align*}
Since $R$ can be arbitrarily large, the above inequality holds for all $\by  \in \Pi $.
Letting $\tau \to 0$, we conclude that $v\equiv 0$, since $v_5\rightarrow0$ everywhere in $\Pi$ as $\tau\rightarrow0$.
This completes the proof.

\smallskip
\section{$\,$ Fixed Point of the Iteration Map: Proof of Theorem 2.1}\label{sec-6}
In order to apply Theorem \ref{lem-linear}, we need to verify the assumptions in Theorem \ref{lem-linear}
for the weak transonic shock problem.
First, we verify the uniform ellipticity of equation \eqref{eqn-vflinear}, which is summarized in the following proposition:

\begin{prop}\label{prop:5.1a}
If $\varphi^+_0$ is a uniform subsonic solution, then
there exists $\lambda >0$ depending on $\varphi_0^+$ such that
\begin{align}\label{5.1a}
\lambda |\boldsymbol{\xi}|^2 \le \sum_{i,j=1}^{3}a_{ij}^0 \xi_i\xi_j \le \frac{1}{\lambda}  |\boldsymbol{\xi}|^2
\qquad\mbox{for any $\boldsymbol{\xi} \in \R^3$},
\end{align}	
that is, equation \eqref{eqn-shocklinear} is uniformly elliptic.
\end{prop}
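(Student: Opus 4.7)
The plan is to reduce the claim to a direct eigenvalue computation for the matrix $(a_{ij}^0)$, using the explicit formula \eqref{exp-aij} and the strict subsonicity of $\varphi_0^+$. Since $U_0^+ = D\varphi_0^+$ is a constant vector by the definition of the background solution, we have
\begin{equation*}
a_{ij}^0 = c^2(|U_0^+|^2)\,\delta_{ij} - u_i^+ u_j^+,
\end{equation*}
and therefore, for every $\boldsymbol{\xi} = (\xi_1,\xi_2,\xi_3) \in \mathbb{R}^3$,
\begin{equation*}
\sum_{i,j=1}^{3} a_{ij}^0 \xi_i \xi_j = c^2(|U_0^+|^2)\,|\boldsymbol{\xi}|^2 - (U_0^+\cdot\boldsymbol{\xi})^2.
\end{equation*}

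For the lower bound I would apply the Cauchy--Schwarz inequality $(U_0^+\cdot\boldsymbol{\xi})^2 \le |U_0^+|^2 |\boldsymbol{\xi}|^2$ to deduce
\begin{equation*}
\sum_{i,j=1}^{3} a_{ij}^0 \xi_i \xi_j \ge \bigl(c^2(|U_0^+|^2) - |U_0^+|^2\bigr) |\boldsymbol{\xi}|^2.
\end{equation*}
The uniform subsonicity of $\varphi_0^+$, namely $|U_0^+| < c(|U_0^+|^2)$ (equivalently $|U_0^+| < c_*$ by the discussion following \eqref{def-rho}), guarantees that $\lambda_0 := c^2(|U_0^+|^2) - |U_0^+|^2 > 0$, and this constant depends only on the background state. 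For the upper bound I would discard the non-positive term $-(U_0^+\cdot\boldsymbol{\xi})^2$ to obtain
\begin{equation*}
\sum_{i,j=1}^{3} a_{ij}^0 \xi_i \xi_j \le c^2(|U_0^+|^2)\,|\boldsymbol{\xi}|^2.
\end{equation*}

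Choosing $\lambda := \min\bigl(\lambda_0,\; c^{-2}(|U_0^+|^2)\bigr) > 0$, which depends only on $\varphi_0^+$, yields both inequalities of \eqref{5.1a} simultaneously. There is essentially no obstacle here: the ellipticity of the potential flow operator at a uniformly subsonic state is a textbook fact, and the only thing to emphasize is that the strict subsonicity gap $c^2(|U_0^+|^2) - |U_0^+|^2$ is a strictly positive constant uniform in the direction $\boldsymbol{\xi}$, which is exactly what is needed to quote Lemma \ref{lem-linear} later with a uniform ellipticity constant.
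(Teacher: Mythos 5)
Your proof is correct and follows the same approach as the paper: the paper simply asserts that \eqref{5.1a} ``follows immediately'' from the formula $a^0_{ij}=c^2(|U_0^+|^2)\delta_{ij}-u_i^+u_j^+$ and the uniform subsonicity of $\varphi_0^+$, and you have merely spelled out the immediate step (the Cauchy--Schwarz bound $(U_0^+\cdot\boldsymbol{\xi})^2\le|U_0^+|^2|\boldsymbol{\xi}|^2$ and the resulting eigenvalue gap $c^2-|U_0^+|^2>0$).
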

\begin{proof}
$\,$ Since $\varphi^+_0$ is a weak transonic solution, $\varphi_{0}^+$ is a uniform subsonic solution.
By the definition, $a^0_{ij}=a_{ij}(U^+_0)=c^2(|\DD\varphi_0^+|^2)\delta_{ij}-\partial_{x_i}\varphi_0^+\partial_{x_j}\varphi_0^+$.
Hence, claim \eqref{5.1a} follows immediately from the fact that the background solution $\varphi_{0}^+$
is a uniform subsonic solution.
\end{proof}

Next, we verify the obliqueness of the boundary condition on the shock and the non-obliqueness of the boundary condition
at the wedge edge.
More precisely, for a piecewise constant weak transonic flow $(\vf_0^-, \vf_0^+)$,
we need to check the direction of $\bmu$ defined by \eqref{def-mu}, which is described as follows:
\begin{prop}\label{prop:5.2a}
$\,$ If $(\varphi_0^-,\varphi_0^+ )$ is a weak transonic shock solution, then
\begin{align*}
\mu_1 >0, \qquad \mu_2 >0, \qquad \bmu\cdot\bn_{\rm sh}>0,
\end{align*}
where $\bn_{\rm sh}$ is the outer unit normal of the shock surface $\cs_0$.
\end{prop}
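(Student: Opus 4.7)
The plan is to verify the three sign claims by a direct computation of $\bmu = H_{\bv}(U_0^-, U_0^+)$ from the explicit vectors in \eqref{def-u0-}--\eqref{def-u0+}, together with the Rankine--Hugoniot identity \eqref{eqn-utheta}. Differentiating $H(\bu,\bv) = (\rho(|\bu|^2)\bu - \rho(|\bv|^2)\bv)\cdot(\bu-\bv)$ in $\bv$ produces
$$
H_{\bv}(\bu,\bv) = -\bigl(\rho(|\bu|^2)+\rho(|\bv|^2)\bigr)\bu + 2\rho(|\bv|^2)\bv - 2\rho'(|\bv|^2)\bigl(\bv\cdot(\bu-\bv)\bigr)\bv.
$$
I will then evaluate component-by-component at $\bu = U_0^-$ and $\bv = U_0^+$, writing $\rho^\pm := \rho(|U_0^\pm|^2)$ and using $\rho'<0$ and $\rho^-<\rho^+$ throughout.

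The identity $\mu_2>0$ is immediate because $(U_0^+)_2 = 0$ annihilates the last two terms, leaving $\mu_2 = (\rho^-+\rho^+)\,q_0\sin\theta_{\rm i}\sin\theta_{\rm w}$, which is strictly positive since $\theta_{\rm i},\theta_{\rm w}\in(0,\frac{\pi}{2})$.

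For $\mu_1$, I would first use \eqref{eqn-utheta} to combine the terms $-(\rho^-+\rho^+)q_0\sin\theta_{\rm i}\cos\theta_{\rm w}+2\rho^+u_0$ into the single quotient $(\rho^+u_0^2-\rho^-q_0^2\sin^2\theta_{\rm i})/u_0$, and then simplify $U_0^+\cdot(U_0^- - U_0^+) = u_0\sigma q_0\sin\theta_{\rm i}\sin\theta_{\rm w}$ via \eqref{exp-k0}, which makes the $\rho'$ summand manifestly positive. This puts $\mu_1$ in the form of a momentum-flux jump across the shock plus a strictly positive lower-order contribution. Using Bernoulli's law to trade $q_0^2\sin^2\theta_{\rm i}-u_0^2$ for an enthalpy jump and invoking the weak-shock characterisation (the downstream state lies on the subsonic, low-density subarc of the shock polar described in \S \ref{subsec:background}) yields the required sign. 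It is exactly at this point that the weak-versus-strong distinction enters: on the strong subarc the analogous expression reverses sign, which is the mechanism behind the instability in \cite{LXY}.

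Finally, $\bmu\cdot\bn_{\rm sh}>0$ with the outer normal $\bn_{\rm sh}=(-\sigma,1,0)/\sqrt{1+\sigma^2}$ reduces to $\mu_2-\sigma\mu_1>0$. Substituting the explicit expressions for $\mu_1,\mu_2$, eliminating $\sigma$ via \eqref{exp-k0}, and cancelling through \eqref{eqn-utheta}, this inequality becomes the standard obliqueness of the linearised Rankine--Hugoniot condition on the weak-shock subarc of the two-dimensional shock polar, a computation whose 2D counterpart is carried out in \cite{CF}; the 3D case reduces to it because $(\bn_{\rm sh})_3=0$ kills the $\mu_3$-contribution. The main obstacle is bookkeeping: keeping the algebra organised so that the weak-shock hypothesis $\theta_{\rm w}\in(\theta_{\rm s}^*,\theta_{\rm w}^*)$ is invoked exactly where needed and the strict inequalities survive uniformly on this open interval, degenerating only at the endpoints $\theta_{\rm w}\to\theta_{\rm s}^*$ (sonic downstream, where ellipticity from Proposition \ref{prop:5.1a} degenerates) and $\theta_{\rm w}\to\theta_{\rm w}^*$ (the detachment point where the weak and strong branches coalesce).
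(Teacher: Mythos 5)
Your computation of $H_{\bv}$ is correct, and your treatment of $\mu_2>0$ (the $(U_0^+)_2=0$ cancellation leaving $\mu_2 = q_0\sin\theta_{\rm i}\sin\theta_{\rm w}(\rho^-+\rho^+)$) matches the paper exactly. Your outline for $\bmu\cdot\bn_{\rm sh}>0$ is also in the same spirit as the paper's: the paper substitutes the explicit $\mu_1,\mu_2$ and $\sigma=-(u_1-v_1)/u_2$ and, after cancelling through \eqref{eqn-utheta}, lands on the manifestly positive form
\[
-\mu_1\sigma+\mu_2 = -\frac{\rho^+}{u_2}\Big(\big(1-\tfrac{v_1^2}{(c^+)^2}\big)(u_1-v_1)^2+u_2^2\Big),
\]
which requires only $u_2<0$ and subsonicity $v_1^2<(c^+)^2$ — note in particular that the weak-versus-strong distinction is \emph{not} needed here, only transonicity of the downstream state, so invoking the ``weak-shock subarc'' at this stage is superfluous.

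Where you genuinely diverge from the paper — and where a gap opens — is $\mu_1>0$. You try to show it by direct algebra: eliminating $(\rho^-+\rho^+)u_1$ via \eqref{eqn-utheta} gives $\mu_1 = \frac{\rho^+v_1^2 - \rho^- q_0^2\sin^2\theta_{\rm i}}{v_1} + \frac{\rho^+v_1^2}{(c^+)^2}(u_1-v_1)$. The second summand is positive (the one you call ``the $\rho'$ summand''), but the first has no evident sign: $\rho^+>\rho^-$ pulls it up while $v_1^2 < q_0^2\sin^2\theta_{\rm i}$ pulls it down, and the tension between them is precisely where the weak-shock hypothesis must do real work. Your appeal to ``Bernoulli's law \dots\ enthalpy jump \dots\ invoking the weak-shock characterisation yields the required sign'' names the right ingredient but does not supply the argument; as written this step is not a proof. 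The paper sidesteps the whole issue by never computing $\mu_1$ directly: it treats $v_2$ as a function of $v_1$ on the shock polar, differentiates $H(U_0^-, v_1, v_2(v_1), q_0\cos\theta_{\rm i})=0$ to get $\mu_1 + \mu_2\,v_2'(v_{\rm w}) = 0$, and uses the convexity of the polar (which gives $v_2'(v_{\rm w})<0$ on the weak branch) together with $\mu_2>0$ to conclude $\mu_1>0$. This is exactly the clean, one-line place where the weak-versus-strong dichotomy enters ($v_2'(v_{\rm s})>0$ on the strong branch reverses the sign), and it is the route you should take rather than the direct algebraic estimate.
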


\begin{proof}
$\,$ In equation \eqref{def-H}, we fix the incoming flow $\bu= U_0^- $ given as in \eqref{def-u0-}.
By the Rankine-Hugoniot condition \eqref{con-RH4} and the fact that the $x_3$--axis is the wedge edge (\emph{i.e.}, the $x_3$--axis lies in the shock plane),
we see that $v_3=q_0\cos\theta_{\rm i}$.
Then $v_2$ can be considered as a smooth function of $v_1$ by solving \eqref{def-H} for $v_2$.
There are two solutions of equation $v_2(v_1)=0$ (owing to the choice of the coordinates such that the downstream subsonic flow
is parallel to the $(x_1,x_3)$--plane in \S \ref{subsec:background}); also see Fig. 3.1.
We denote the two solutions by $v_{\rm s}$ and $v_{\rm w}$, which correspond to the strong and weak transonic shock solutions, respectively.
Using the convexity of the two-dimensional shock polar for the potential flow (see {\it e.g.} \cite[Lemma 7.3.2, page 249]{CF2}),
we have the following properties:
\begin{align*}
&v_{\rm s} < v_{\rm w},\quad v_2'(v_{\rm s}) >0,\quad v_2'(v_{\rm w}) <0,\\
& v_2(v_1) >0\qquad \mbox{for any $v_2 \in (v_{\rm s} ,v_{\rm w})$}.
\end{align*}
	
Take the weak transonic flow by letting $v_0= v_{\rm w}$.
Then we have the downstream flow
$U_0^+ =(v_0 ,0, q_0\cos\theta_{\rm i})$ in \eqref{def-u0+}.
Differentiating
\begin{align*}
H(U^-_0, v_1, v_2(v_1), q_0\cos\theta_{\rm i}) = 0
\end{align*}
with respect to $v_1$ and letting $v_1= v_{\rm w}$ imply
\begin{align*}
\big(H_{v_1}+ H_{v_2}v_2'\big)(U^-_0, U^+_0)=0.
\end{align*}
	
By the definition that $\bmu=H_{\bv}(U^-_0, U^+_0)$,
$v_2'(v_{\rm w}) <0$ implies $\mu_1 \mu_2 >0$.
A direct computation shows
\begin{align*}
\mu_2 = q_0 \sin\theta_{\rm i} \sin \theta_{\rm w} \big(\rho(|U_0^-|^2) +\rho(|U_0^+|^2)\big) >0,
\end{align*}
which yields that $\mu_1 >0$.
	
Next, we show that $\bmu\cdot\bn_{\rm sh}>0$.
Notice that
\begin{align*}
\bn_{\rm sh} = \frac{1}{\sqrt{1+\sigma^2}}(-\sigma, 1,0),
\end{align*}
where $\sigma$ is the slope given by \eqref{exp-k0}.
Thus, it suffices to prove
\begin{align*}
-\mu_1 \sigma + \mu_2 >0.
\end{align*}
To simplify the notation, denote
\begin{align*}
U_0^-&=(u_1, u_2,u_3), \quad U_0^+=(v_1, 0,u_3),\\
\rho^-&=\rho(|U_0^-|^2), \quad\rho^+=\rho(|U_0^+|^2), \quad c^+= c(|U_0^+|^2).
\end{align*}
Then we have
\begin{align*}
&\sigma=-\frac{u_1 -v_1}{u_2},\\
&\mu_1=\rho^+\Big(\frac{v_1^2}{(c^+)^2}-1\Big)(u_1 -v_1)-\rho^-u_1+\rho^+v_1,\\
&\mu_2=-u_2(\rho^- + \rho^+),
\end{align*}
so that
$$
-\mu_1 \sigma + \mu_2
 = -\frac{\rho^+}{u_2}\Big(\big(1-\frac{v_1^2}{(c^+)^2}\big)(u_1 -v_1)^2 +u_2^2\Big).
$$
By the subsonicity of $U_0^+$, we see that $v_1^2 < (c^+)^2 $.
Together with the fact that $u_2 <0$, we conclude that $-\mu_1 \sigma + \mu_2>0 $.
\end{proof}

Therefore, based on Propositions \ref{prop:5.1a}--\ref{prop:5.2a},
the assumptions in Theorem \ref{lem-linear} hold for the solutions that are small perturbations of the weak transonic plane shock.
Using Theorem \ref{lem-linear} and following the iteration scheme introduced in \S \ref{sec-iteration},
given $(\gd \hs, \gd \vf)\in\mathcal{K}$, we solve {Problem \ref{prob-linear}} uniquely with
$v=\gd \tvf$, $f_1=f^{\hs}$, $g_1=g_{\rm w}^{\hs}$, $g_2=g_{\rm s}^{\hs}$, and $g_3=g_{\rm e}^{\hs}$,
where $\mathcal{K} $ is defined by \eqref{def-K1}--\eqref{def-K2}, and the expressions of $f^{\hs}_0$, $g_{\rm w}^{\hs}$,
$g_{\rm s}^{\hs}$, and $g_{\rm e}^{\hs}$ are given by \eqref{def-f}, \eqref{def-gw}, \eqref{def-gs}, and \eqref{def-ge}, respectively.
Then we are going to show Theorem \ref{thm-main} by establishing the contraction of $\ft$ defined by \eqref{def:T},
where $\delta\tilde{s}$ is given by \eqref{eqn-updateshock}.

\medskip
\smallskip
\noindent
{\it Proof of Theorem {\rm \ref{thm-main}} $($Main Theorem$)$}. $\,\,$ The proof is divided into four steps.

\smallskip
1. $\,$ We first show that the map is well-defined: $\ft$ is a map from $\mathcal{K}$ to itself.

Based on Propositions \ref{prop:5.1a}--\ref{prop:5.2a},
the conditions in Theorem \ref{lem-linear} are satisfied so that we can uniquely solve $\delta\tilde{\varphi}=v$
with the following estimate:
\begin{align}\label{est-dvftilde}
\|\gd \tvf\|_{2,\ga;(-\gb);\Pi}^{(-1-\ga)}
\le C \Big(\|f^{\hs}\|_{0,\ga;(2-\gb);\Pi}^{(1-\ga)} + \|g_{\rm w}^{\hs}\|_{1,\ga;(1-\gb);\cd_0}^{(-\ga)}
+ \|g_{\rm s}^{\hs}\|_{1,\ga;(1-\gb);\cf_0}^{(-\ga)} +  \|g_{\rm e}^{\hs}\|_{1,\ga;\R}\Big).
\end{align}	

First, we need to estimate the right-hand side of \eqref{est-dvftilde} carefully.
Based on the definition of the coordinate transformation \eqref{def-transy2y3} and \eqref{def-transy1},
a straightforward calculation gives
\begin{align}
&\frac{\partial \by}{\partial {x_1}}=(\frac{1 +(\gd \ls)_{y_2}w_{x_1} }{1+ (\gd \ls)_{y_1}}, -w_{x_1}, 0),\label{exp-dydx1}\\
&\frac{\partial \by}{\partial {x_2}}=(-\frac{(\gd \ls)_{y_2} }{1+ (\gd \ls)_{y_1}}, 1,0),\\
&\frac{\partial \by}{\partial {x_3}}=(\frac{(\gd \ls)_{y_2}w_{x_3}-(\gd \ls)_{y_3} }{1+ (\gd \ls)_{y_1}}, -w_{x_3}, 1).
\label{exp-dydx4}
\end{align}
Thus, we have the following estimate:
\begin{align*}
\|\frac{\partial y_i}{\partial x_j}-\delta_{ij}\|_{1,\alpha;(1-\beta);\Pi}^{(-\alpha)}
+ \|\frac{\partial^2 y_i}{\partial x_k \partial x_m}\|_{0,\alpha;(2-\beta),\Pi}^{(1-\alpha)}
&\leq C\Big(\|w\|_{2,\alpha;(-\beta);\cd_{e_1}}^{(-1-\alpha)}+\|\gd \hs\|_{2,\alpha;(-\beta);\cf_0}^{(-1-\alpha)}\Big)
\end{align*}
for any $i, j, k, m=1, 2, 3$.

Then, by the definition of $f^{\hs}$ (see \eqref{def-atilde}--\eqref{def-btilde} and \eqref{def-f}), we have
 \begin{align*}
\|f^{\hs}\|_{0,\ga;(2-\gb);\Pi}^{(1-\ga)}
&\le{} \|\gd \vf\|_{2,\ga;(-\gb);\Pi}^{(-1-\ga)}\Big( \sum_{i,j=1}^3\|a_{ij}^0 - \tilde{a}_{ij}^{\hs}\|_{1,\alpha;(1-\beta);\Pi}^{(-\alpha)} +\sum_{i=1}^3\|\tilde{b}_{i}^{\hs}\|_{0,\alpha;(2-\beta),\Pi}^{(1-\alpha)} \Big) \\
& \le{} C\|\gd \vf\|_{2,\ga;(-\gb);\Pi}^{(-1-\ga)}
  \Big(\|w\|_{2,\alpha;(-\beta);\cd_{e_1}}^{(-1-\alpha)}+\|\gd \hs\|_{2,\alpha;(-\beta);\cd_0}^{(-1-\alpha)}+\|\gd \vf\|_{2,\ga;(-\gb);\Pi}^{(-1-\ga)}\Big)\\
&\le{} CC_0^2\ve^2,
 \end{align*}
provided that $C_0\ve < \lambda_0$, where $\lambda_0>0$ is a fixed  constant depending on $\vf_0^+$, and $C$ depends on $\lambda_0$.
Similarly, by definitions \eqref{def-gw}, \eqref{def-gstilde}, \eqref{def-gs}, and \eqref{def-ge}, we have the following estimates:
 \begin{align*}
&\big\|g_{\rm w}^{\hs}|_{\cw_0}\big\|_{1,\ga;(1-\gb);\cd_0}^{(-\ga)}\le C \|w\|_{2,\alpha;(-\beta);\cd_{e_1}}^{(-1-\alpha)} \le C\ve,\\[1mm]
& \big\|\tilde{g}_{\rm s}^{\hs}|_{\cs_0}\big\|_{1,\ga;(1-\gb);\cf_0}^{(-\ga)}
 \le C  \Big(\|\gd \vf^-_{\hs}\|_{2,\ga;(-\gb);\Pi^-}^{(-1-\ga)}+(\|\gd \vf\|_{2,\ga;(-\gb);\Pi}^{(-1-\ga)})^2\Big),\\[1mm]
& \big\|g_{\rm s}^{\hs}|_{\cs_0}\big\|_{1,\ga;(1-\gb);\cf_0}^{(-\ga)}
  \le C \|\gd \vf\|_{2,\ga;(-\gb);\Pi}^{(-1-\ga)}\|I - J_{\bx} \by\|_{1,\ga;(1-\gb);\Pi}^{(-\ga)}
   +  \big\|\tilde{g}_{\rm s}^{\hs}|_{\cs_0}\big\|_{1,\ga;(1-\gb);\cf_0}^{(-\ga)}
    \le C(\ve + C_0^2\ve^2),\\
&\big\|g_{\rm e}^{\hs}\big\|_{1,\ga;\R}
 \le C\Big( \|\gd \vf^-_{\hs}\|_{2,\ga;(-\gb);\Pi^-}^{(-1-\ga)} +\|w\|_{2,\alpha;(-\beta);\cd_{e_1}}^{(-1-\alpha)}+  \|e_1\|_{1,\ga;\R}\Big)\le C\ve.
\end{align*}
Therefore, we obtain
\begin{align}\label{est-dphitilde}
 \|\gd \tvf\|_{2,\ga;(-\gb);\Pi}^{(-1-\ga)} \le C\ve\big(1 + C_0^2 \ve\big) \le C_0 \ve,
\end{align}
by choosing $C_0 > 2C$ and $\ve < \frac{1}{C_0^2}$.

Next, we consider the estimate for $\gd \ts$ obtained via \eqref{eqn-updateshock}. Rewrite \eqref{eqn-updateshock} into
 \begin{align*}
\big((\vf^- -\vf^-_0) +(\vf^-_0-\vf^+_0)\big)(s_0+ \gd \ts, y_2+w(s_0+ \gd \ts,y_3),y_3)
 = \gd \tvf(\frac{y_2}{\sigma},y_2,y_3).
\end{align*}
Since $\big(\vf^-_0-\vf^+_0\big)|_{\cs_0}=0$, the equality above gives
\begin{align}\label{exp-dstilde}
\gd \ts (y_2,y_3)
=&\,\frac{1}{\sigma}w(s_0+ \gd \ts,y_3)    \nonumber\\
&\, + \frac{\gd \tvf(\frac{1}{\sigma}y_2,y_2,y_3)
- \big(\vf^- -\vf^-_0\big)(s_0+ \gd \ts, y_2+w(s_0+ \gd \ts,y_3),y_3)}{q_0 \sin\theta_{\rm i} \cos \theta_{\rm w} - u_0},
\end{align}
which leads to the following estimate:
\begin{align}
\|\gd \ts\|_{2,\ga;(-\gb);\cf_0}^{(-1-\ga)}
\le C\Big( \|\gd \tvf\|_{2,\ga;(-\gb);\Pi}^{(-1-\ga)} +\|\vf^- - \vf^-_0\|_{2,\ga;(-\gb);\gO^-_e}^{(-1-\ga)}+ \|w\|_{2,\ga;(-\gb);\cd_{e_1}}^{(-1-\ga)}\Big)
\le C_0 \ve. \label{est-dstilde}
\end{align}

Therefore, we have shown that $\ft$ is a map from $\mathcal{K}$ to itself.
Finally, we remark that estimate \eqref{est-dstilde} for $\gd \ts$ also
guarantees that the updated shock surface in the $\by$--coordinates stays in $\Pi^-$.

\smallskip
2. $\,$ In this step, we show the contraction of  $\ft$.

Given two pairs $(\gd \hs^{(i)}, \gd \vf^{(i)}) \in \mathcal{K}$, let $(\gd \ts^{(i)}, \gd \tvf^{(i)})=\ft(\gd \hs^{(i)}, \gd \vf^{(i)})$ for $i=1$, $2$.
By the definition of $\ft$, we know that $v= \gd \tvf^{(1)} - \gd \tvf^{(2)}$ solves {Problem \ref{prob-linear}} with
\begin{align}
f_1 & = f^{\hs^{(1)}} (\by, \DD (\gd \vf^{(1)}), \DD^2 ( \gd \vf^{(1)}))- f^{\hs^{(2)}}(\by,\DD (\gd \vf^{(2)}), \DD^2 (\gd \vf^{(2)})),\label{def-f112}\\
g_1& =g_{\rm w}^{\hs^{(1)}}(\by, \DD (\gd \vf^{(1)})) - g_{\rm w}^{\hs^{(2)}}(\by,\DD (\gd \vf^{(2)})),\label{def-g112}\\
g_2& =g_{\rm s}^{\hs^{(1)}} (\by, \DD (\gd \vf^{(1)}))- g_{\rm s}^{\hs^{(2)}}(\by,\DD (\gd \vf^{(2)})),\label{def-g212}\\
g_3& =g_{\rm e}^{\hs^{(1)}} - g_{\rm e}^{\hs^{(2)}}.\nonumber
\end{align}

Since
\begin{align*}
g_{\rm e}^{\hs^{(i)}} &= \big(\vf^- -\vf^+_0\big)(\gd \hs^{(i)}(0,y_3), w(\gd \hs^{(i)}(0,y_3)),y_3)
= \big(\vf^- -\vf^+_0\big)(e_1(y_3), e_2(y_3),y_3),
\end{align*}
we have
\begin{align}\label{5.14}
g_3& =g_{\rm e}^{\hs^{(1)}} - g_{\rm e}^{\hs^{(2)}}=0.
\end{align}

Denote the coordinate transformation related to  $\gd \hs^{(i)}$ by $\by^{(i)}(\bx)$,
and set
\begin{align*}
\lw^{(i)} (\by):= w(y_1 + \gd \ls^{(i)}(\by), y_3),\qquad\, \lw^{(i)}_{x_j} (\by):= w_{x_j}(y_1 + \gd \ls^{(i)}(\by), y_3).
\end{align*}
It is direct to see
\begin{align}
\lw^{(1)}-\lw^{(2)}&=\int_0^1w_{x_1}(y_1+\tau \delta\ls^{(1)}(\by)+(1-\tau)\delta\ls^{(2)}(\by)) {\rm d}\tau\, (\delta\ls^{(1)}-\delta\ls^{(2)}), \label{exp-wbar12}\\
\lw_{x_j}^{(1)}-\lw_{x_j}^{(2)}&=\int_0^1w_{x_1x_j}(y_1+\tau \delta\ls^{(1)}(\by)+(1-\tau)\delta\ls^{(2)}(\by)) {\rm d}\tau\, (\delta\ls^{(1)}-\delta\ls^{(2)}).\label{exp-wbarxj12}
\end{align}
Thus, with assumption \eqref{2.17}, we have
\begin{align}
&\|\lw^{(1)} - \lw^{(2)}\|_{2,\ga;(-\gb);\cd_0}^{(-1-\ga)} + \sum_{j=1}^3 \|\lw_{x_j}^{(1)}-\lw_{x_j}^{(2)}\|_{1,\ga;(1-\gb);\cd_0}^{(-\ga)}\nonumber\\
&\leq C\|w_{x_1}\|_{2,\ga;(-\gb);\cd_{e_1}}^{(-1-\ga)}\|\delta\hs^{(1)} - \delta\hs^{(2)}\|_{2,\ga;(-\gb);\cf_0}^{(-1-\ga)} \nonumber\\[1mm]
&\le C\ve \|\delta\hs^{(1)} - \delta\hs^{(2)}\|_{2,\ga;(-\gb);\cf_0}^{(-1-\ga)}. \label{est-w12}
\end{align}
Then expressions  \eqref{exp-dydx1}--\eqref{exp-dydx4} and estimate \eqref{est-w12} lead to
\begin{align}
&\big\|\frac{\partial y_i^{(1)}}{\partial x_j}-\frac{\partial y_i^{(2)}}{\partial x_j}\big\|_{1,\alpha;(1-\beta);\Pi}^{(-\alpha)}
\leq C\ve \|\delta\hs^{(1)} - \delta\hs^{(2)}\|_{2,\ga;(-\gb);\cf_0}^{(-1-\ga)},\\
&\big\|\frac{\partial^2 y_i^{(1)}}{\partial x_j \partial x_m}-\frac{\partial^2 y_i^{(2)}}{\partial x_j \partial x_m}\big\|_{0,\alpha;(2-\beta);\Pi}^{(1-\alpha)}
\leq C\ve \|\delta\hs^{(1)} - \delta\hs^{(2)}\|_{2,\ga;(-\gb);\cf_0}^{(-1-\ga)}.\label{est-y12xjxm}
\end{align}

Since the definition of $f_1$ in \eqref{def-f112} involves \eqref{def-atilde}--\eqref{def-btilde} and \eqref{def-f},
estimates \eqref{est-w12}--\eqref{est-y12xjxm} imply
\begin{align}
\|f_1 \|_{0,\alpha;(2-\beta);\Pi}^{(1-\alpha)}
&\leq C\ve \Big(\|\delta\hs^{(1)} - \delta\hs^{(2)}\|_{2,\ga;(-\gb);\cf_0}^{(-1-\ga)} + \|\gd \vf^{(1)} - \gd \vf^{(2)}\|_{2,\ga;(-\gb);\Pi}^{(-1-\ga)}\Big).
\label{est-f112}
\end{align}

In the same manner, we can obtain the following estimates for $g_1$ in \eqref{def-g112} and $g_2$ in \eqref{def-g212}:
\begin{align}
&\big\|g_1|_{\cw_0}\big\|_{1,\ga;(1-\gb);\cd_0}^{(-\ga)}
 +\big\|g_2|_{\cs_0}\big\|_{1,\ga;(1-\gb);\cf_0}^{(-\ga)}\\
& \le C\ve\Big(\|\delta\hs^{(1)} - \delta\hs^{(2)}\|_{2,\ga;(-\gb);\cf_0}^{(-1-\ga)} + \|\gd \vf^{(1)} - \gd \vf^{(2)}\|_{2,\ga;(-\gb);\Pi}^{(-1-\ga)}\Big).
\label{est-g1g2}
\end{align}
Therefore, by Theorem \ref{lem-linear}, estimates \eqref{5.14}, \eqref{est-f112}, and \eqref{est-g1g2} imply
\begin{align}
\|\gd \tvf^{(1)} - \gd \tvf^{(2)} \|_{2,\alpha;(-\beta);\Pi}^{(-1-\alpha)}
&\leq C\ve \Big(\|\delta\hs^{(1)} - \delta\hs^{(2)}\|_{2,\ga;(-\gb);\cf_0}^{(-1-\ga)} + \|\gd \vf^{(1)} - \gd \vf^{(2)}\|_{2,\ga;(-\gb);\Pi}^{(-1-\ga)}\Big).
\label{est-phitilde12}
\end{align}

We now estimate the difference between the two updated shocks. By identity \eqref{exp-dstilde}, we have
 \begin{align*}
&\gd \ts^{(1)}  - \gd \ts^{(2)} \\
&= \frac{1}{\sigma}w(s_0+ \gd \ts^{(1)},y_3) - \frac{1}{\sigma}w(s_0+ \gd \ts^{(2)},y_3)\\
&\quad+ \frac{1}{q_0 \sin\theta_{\rm i} \cos \theta_{\rm w} - u_0}\Big(\gd \tvf^{(1)}(\frac{1}{\sigma}y_2,y_2,y_3) -\gd \tvf^{(2)}(\frac{1}{\sigma}y_2,y_2,y_3) \\
&\quad\qquad\qquad\qquad\qquad\qquad\, - \big(\vf^- -\vf^-_0\big)(s_0+ \gd \ts^{(1)}, y_2+w(s_0+ \gd \ts^{(1)},y_3),y_3)\\
&\quad\qquad\qquad\qquad\qquad\qquad\,  + \big(\vf^- -\vf^-_0\big)(s_0+ \gd \ts^{(2)}, y_2+w(s_0+ \gd \ts^{(2)},y_3),y_3)\Big).
\end{align*}
Following the same approach from \eqref{exp-wbar12} to \eqref{est-w12}, we can write $ \gd \ts^{(1)}  - \gd \ts^{(2)} $
in an integral form and use assumption \eqref{2.17} to obtain the following estimate:
\begin{align*}
&	\|\gd \ts^{(1)}  - \gd \ts^{(2)} \|_{2,\ga;(-\gb);\cf_0}^{(-1-\ga)} \\
&\leq C\Big(\|w_{x_1}\|_{2,\ga;(-\gb);\cd_{e_1}}^{(-1-\ga)} + \|\vf^- - \vf^-_0\|_{3,\ga;(-\gb);\gO^-_e}^{(-2-\ga)}\Big)\|\gd \ts^{(1)}
  - \gd \ts^{(2)} \|_{2,\ga;(-\gb);\cf_0}^{(-1-\ga)} \\[0.5mm]
 &\quad
   + C\|\gd \tvf^{(1)} - \gd \tvf^{(2)}\|_{2,\ga;(-\gb);\Pi}^{(-1-\ga)}\\
&\leq C \ve  \|\gd \ts^{(1)}  - \gd \ts^{(2)} \|_{2,\ga;(-\gb);\cf_0}^{(-1-\ga)} + C \|\gd \tvf^{(1)} - \gd \tvf^{(2)}\|_{2,\ga;(-\gb);\Pi}^{(-1-\ga)}.
\end{align*}
Choosing $\ve$ sufficiently small so that $C\ve <\frac{1}{2}$, we conclude
\begin{align}
	\|\gd \ts^{(1)}  - \gd \ts^{(2)} \|_{2,\ga;(-\gb);\cf_0}^{(-1-\ga)} \le  C \|\gd \tvf^{(1)} - \gd \tvf^{(2)}\|_{2,\ga;(-\gb);\Pi}^{(-1-\ga)}. \label{est-dstilde12}
\end{align}
This, together with \eqref{est-f112}, gives rise to
\begin{align}\label{est-stilde122}
	\|\gd \ts^{(1)}  - \gd \ts^{(2)} \|_{2,\ga;(-\gb);\cf_0}^{(-1-\ga)}
	&\leq C\ve \Big(\|\delta\hs^{(1)} - \delta\hs^{(2)}\|_{2,\ga;(-\gb);\cf_0}^{(-1-\ga)} + \|\gd \vf^{(1)} - \gd \vf^{(2)}\|_{2,\ga;(-\gb);\Pi}^{(-1-\ga)}\Big).
\end{align}
Estimates \eqref{est-phitilde12} and \eqref{est-stilde122} imply
\begin{align}
&\|\gd \ts^{(1)}  - \gd \ts^{(2)} \|_{2,\ga;(-\gb);\cf_0}^{(-1-\ga)}
+  \|\gd \tvf^{(1)} - \gd \tvf^{(2)}\|_{2,\ga;(-\gb);\Pi}^{(-1-\ga)}\nonumber\\
&\leq C \ve \Big(\|\delta\hs^{(1)} - \delta\hs^{(2)}\|_{2,\ga;(-\gb);\cf_0}^{(-1-\ga)} + \|\gd \vf^{(1)} - \gd \vf^{(2)}\|_{2,\ga;(-\gb);\Pi}^{(-1-\ga)}\Big),
\label{est-s-phi-tilde12}
\end{align}
which leads to the contraction of $\ft$, provided that  $\ve$ is sufficiently small.

\smallskip
3. $\,$ Based on \eqref{est-s-phi-tilde12},
by the Banach fixed-point theorem,  there exists a unique fixed point $(\gd s^*, \gd  \varphi^*)$ of $\ft$ in $\mathcal{K}$.
It follows from the definition of the coordinate transformation \eqref{def-transy2y3}--\eqref{def-transy1}
that $\gd s^*$ uniquely determines transformation $\by(\bx)$.
Thus, by \eqref{def-dphiy}, we set $\vf^+(\bx):= \vf^*(\by(\bx))$, which is the unique solution of {Problem \ref{prob}}.

To show estimate \eqref{est-thm}, we set
\begin{align*}
\ve':= \|\vf^- - \vf^-_0\|_{2,\ga;(-\gb);\gO^-_e}^{(-1-\ga)} + \|e_1\|_{1,\ga;\R}+ \|w\|_{2,\ga;(-\gb);\cd_{e_1}}^{(-1-\ga)} .
\end{align*}
By estimates \eqref{est-dphitilde} and \eqref{est-dstilde}, we obtain
 \begin{align}\label{5.25}
 	 \|\gd s^*   \|_{2,\ga;(-\gb);\cf_0}^{(-1-\ga)}
 		+  \|\gd \vf^* \|_{2,\ga;(-\gb);\Pi}^{(-1-\ga)}
 		\leq{}& C \ve',
 \end{align}
which is equivalent to \eqref{est-thm} in the $\bx$-coordinates
when transforming the variables back.

Moreover, the uniqueness follows directly from the contraction of mapping $\mathcal{T}$.

\smallskip
4. $\,$ If $|\vf^--\vf^-_0|+|\DD\vf^--\DD\vf^-_0|+|e_1|+|w|\rightarrow0$ as $x_3\rightarrow\infty$ (or as $x_3\rightarrow-\infty$) pointwise,
then, in the fixed domain $\Pi$, for any fixed $(y_1,y_2)$, $|\vf^--\vf^-_0|+|\DD\vf^--\DD\vf^-_0|\rightarrow0$
as $y_3\rightarrow\infty$ (or as $y_3\rightarrow-\infty$).
Without loss of the generality, it suffices to consider the case that $y_3\rightarrow\infty$,
since the same argument works for the case that $y_3\rightarrow-\infty$ if replacing $y_3+r_n$ by $y_3-r_n$
in the argument below for any sequence $r_n\to \infty$ as $n\to \infty$.

Let $\vf_n(y_1,y_2,y_3)=\vf(y_1,y_2,y_3+r_n)$ on $\Pi$, and $s_n(y_2,y_3)=s(y_2,y_3+r_n)$ on $\mathcal{F}_0$.
Let $\vf^-_n(x_1,x_2,x_3)=\vf(x_1,x_2,x_3+r_n)$ on $\gO^-_{\rm e}$, $e_1^{(n)}(x_3)=e_1(x_3+r_n)$ on $\mathbb{R}$,
and $w_n(x_1,x_3)=w(x_1,x_3+r_n)$ on $\cd_{e_1}$.
Then it follows from \eqref{5.25} that
\begin{align*}
	\| s_n-s_0   \|_{2,\ga;(-\gb);\cf_0}^{(-1-\ga)}
	+  \|\vf_n-\vf_0^+ \|_{2,\ga;(-\gb);\Pi}^{(-1-\ga)}
	\leq{}& C \ve',
\end{align*}
where 	
\begin{align*}
\ve':= \|\vf^-_n - \vf^-_0\|_{2,\ga;(-\gb);\gO^-_{\rm e}}^{(-1-\ga)} + \|e_1^{(n)}\|_{1,\ga;\R}+ \|w_n\|_{2,\ga;(-\gb);\cd_{e_1}}^{(-1-\ga)},
\end{align*}
which is independent of $n$, by the definition of $(\vf^n, e_1^{(n)}, w_n)$ and  the H\"{o}lder norms in \eqref{def:2.27}--\eqref{def-norm}.
Thus, by the compact embedding for the bounded weighted H\"{o}lder norms,
there exist subsequences (still denoted as) $(s_n, \varphi_n)$ and functions $(\bar{s}, \bar{\vf})$ such that,
for $0<\ga' <\ga$,
$$
\|\vf_n-\bar{\vf} \|_{2,\ga';(-\gb);\Pi}^{(-1-\ga')}+\| s_n-\bar{s}   \|_{2,\ga';(-\gb);\cf_0}^{(-1-\ga')}\rightarrow0
\qquad\mbox{as $n\rightarrow\infty$}.
$$
Moreover, because $\vf_n^-\rightarrow\varphi_{0}^-$, $e_1^{(n)}\rightarrow0$, and $w_n\rightarrow0$ pointwise,
going back to the original $\mathbf{x}$--coordinates, it is direct to see that the subsonic solution $\bar{\vf}$
with the shock surface $x_1=\bar{s}(x_2,x_3)$ is a solution of Problem \ref{prob} with the incoming flow $\vf_0^-$,
the wedge surface $x_2=0$, and the wedge edge $(x_1,x_2)=(0,0)$.
In addition, $\bar{s}$ and $\bar{\vf}$ satisfy
\begin{align}\label{5.26}
	 \|\bar{\vf}-\vf_0^+ \|_{2,\ga;(-\gb);\Omega_s}^{(-1-\ga)}
     + \| \bar{s}-s_0   \|_{2,\ga;(-\gb);\cf_{e_2}}^{(-1-\ga)}
	\leq{}& C \ve'.
\end{align}
Note that $\vf_0^+$ with the shock surface $x_1=s_0(x_2,x_3)$ is another subsonic solution of Problem \ref{prob}
with the incoming flow $\vf_0^-$, the wedge surface $x_2=0$, and the wedge edge $(x_1,x_2)=(0,0)$,
and satisfying estimate \eqref{5.26}.
By the uniqueness of such solutions, we obtain that $\bar{s}=s_0$ and $\bar{\vf}=\vf_0^+$.
Since all the limits of converging subsequences of $\varphi_n$ and $s_n$ tend to the same functions $\varphi^+$ and $s$
as $n\rightarrow\infty$, we have
\[
|\varphi^+-\varphi_0^+|+|\DD\varphi^+-\DD\varphi_0^+|+|s-s_0|\rightarrow0
\]
as $x_3\rightarrow\infty$ pointwise.

This completes the proof of Theorem \ref{thm-main}.

\begin{acknowledgements}
$\,$ The research of Gui-Qiang G. Chen was supported in part by the UK Engineering and Physical Sciences Research Council Award EP/L015811/1, and the Royal Society--Wolfson Research Merit Award  WM090014 (UK).
Jun Chen's research was supported in part by Yichun University Doctoral Start-up Grant 207-3360119008 and NSFC Regional Science Funds 12061080 (China).
The research of Wei Xiang was supported in part by the Research Grants Council of the HKSAR, China
(Project CityU 11303518, Project CityU 11304820, and Project CityU 11300021).
\end{acknowledgements}

%
\section*{Conflict of interest}
The authors declare that they have no conflict of interest.



\end{document}